\documentclass[11pt]{amsart}
\usepackage{float}
\usepackage[usenames]{color}
\usepackage{graphicx}
\usepackage{amssymb}
\usepackage{amscd}
\usepackage{makecell}
\hfuzz = 10pt

\addtolength{\textheight}{.1\topmargin}
\addtolength{\textwidth}{.7\oddsidemargin}
\addtolength{\textwidth}{.7\evensidemargin}
\setlength{\topmargin}{.1\topmargin}
\setlength{\oddsidemargin}{.3\oddsidemargin}
\setlength{\evensidemargin}{.3\evensidemargin}

\parskip = 1.5pt

\setcounter{secnumdepth}{3}

\theoremstyle{plain}
\newtheorem{thm}{Theorem}[section]
\newtheorem{lemma}[thm]{Lemma}

\newtheorem{prop}[thm]{Proposition}

\theoremstyle{definition}

\newtheorem{rmk}[thm]{Remark}
\newtheorem{example}[thm]{Example}

\def\dim{\mathop{\hbox {dim}}\nolimits}

\def\ker{\mathop{\hbox{Ker}}\nolimits}

\newcommand{\fra}{\mathfrak{a}}

\newcommand{\frg}{\mathfrak{g}}
\newcommand{\frh}{\mathfrak{h}}

\newcommand{\frk}{\mathfrak{k}}
\newcommand{\frl}{\mathfrak{l}}

\newcommand{\frp}{\mathfrak{p}}
\newcommand{\frq}{\mathfrak{q}}

\newcommand{\frt}{\mathfrak{t}}
\newcommand{\fru}{\mathfrak{u}}

\newcommand{\bbC}{\mathbb{C}}

\newcommand{\bbR}{\mathbb{R}}

\newcommand{\bbZ}{\mathbb{Z}}

\newcommand{\caC}{\mathcal{C}}

\newcommand{\caL}{\mathcal{L}}

\newcommand{\caR}{\mathcal{R}}

\begin{document}

\title{Dirac series of $E_{7(-5)}$}

\author{Yi-Hao Ding}
\address[Ding]{School of Mathematical Sciences, Soochow University, Suzhou 215006,
P.~R.~China}
\email{435025738@qq.com}

\author{Chao-Ping Dong}
\address[Dong]{School of Mathematical Sciences, Soochow University, Suzhou 215006,
P.~R.~China}
\email{chaopindong@163.com}

\author{Ping-Yuan Li}
\address[Li]{School of Mathematical Sciences, Soochow University, Suzhou 215006,
P.~R.~China}
\email{1994959713@qq.com}

\abstract{Using the sharpened Helgason-Johnson bound, this paper classifies all the irreducible unitary representations with non-zero Dirac cohomology of $E_{7(-5)}$. As an application, we find that the cancellation between the even part and the odd part of the Dirac cohomology continues to happen for certain unitary representations of $E_{7(-5)}$. Assuming the infinitesimal character being integral, we further improve the Helgason-Johnson bound for $E_{7(-5)}$. This should help people to understand (part of) the unitary dual of this group.}

\endabstract

\subjclass[2010]{Primary 22E46}

\keywords{cancellation, Dirac cohomology, Helgason-Johnson bound}

\maketitle


\section{Introduction}

A fundamental problem in the representation theory of Lie groups is to understand the unitary dual $\widehat{G}$ of a real reductive Lie group $G$. Here by $\widehat{G}$ we mean the set of the equivalence classes of all the irreducible unitarizable $(\frg, K)$ modules. For simplicity, we assume that $G$ is a connected semisimple Lie group with finite center. Given an irreducible $(\frg, K)$ module $\pi$, it is highly possible that $\pi$ is non-unitary, that is, it does not possess a positive definite invariant Hermitian form. Therefore, to arrive at the unitary dual of $G$ from its admissible dual as classified by Langlands in 1970s, one needs an effective way to rule out the great many non-unitary representations.

Parthasarathy's Dirac operator inequality \cite{Pa2} is a valuable tool on this aspect. Indeed, let $\theta$ be a Cartan involution of $G$ and assume that $K:=G^{\theta}$ is a maximal compact subgroup of $G$. Let
\begin{equation}\label{Cartan-decomposition}
\frg_0=\frk_0 \oplus \frp_0
\end{equation}
be the corresponding Cartan decomposition on the Lie algebra level. As usual, we will drop the subscripts to stand for the complexified Lie algebras. Fix a non-degenerate invariant symmetric bilinear form $B(\cdot, \cdot)$ on $\frg$, which is a scalar multiple of the Killing form on $[\frg, \frg]$ so that each simple coroot has length $2$ when $\frg$ is simply-laced. Let $T_{f}$ be a maximal torus of $K$, and put $A_f:=\exp(\fra_{f, 0})$, where $\fra_{f, 0}$ is the centralizer of $\frt_{f, 0}$ in $\frp_0$. Then up to conjugation, $H_f:=T_f A_f$ is the unique $\theta$-stable maximally compact Cartan subgroup of $G$. Namely, $H_f$ is the fundamental Cartan subgroup of $G$. Let $W(\frg, \frh_f)$ (resp., $W(\frg, \frt_f)$, $W(\frk, \frt_f)$) be the Weyl group of $\Delta(\frg, \frh_f)$ (resp., $\Delta(\frg, \frt_f)$, $\Delta(\frk, \frt_f)$). We fix a positive root system $\Delta^+(\frk, \frt_f)$ once for all. Choose a positive restricted root system $\Delta^+(\frg, \frt_f)$ containing the $\Delta^+(\frk, \frt_f)$. Denote by $\rho$ (resp., $\rho_c$) the half sum of roots in $\Delta^+(\frg, \frt_f)$ (resp., $\Delta^+(\frk, \frt_f)$).

Choose an orthonormal basis $Z_1, \dots, Z_m$ of $\frp_0$. Then the Dirac operator introduced by Parthasarathy \cite{Pa} is
\begin{equation}\label{Dirac-operator}
D:=\sum_{i=1}^{m} Z_i\otimes Z_i\in U(\frg)\otimes C(\frp).
\end{equation}
Here $C(\frp)$ is the Clifford algebra of $\frp$ defined with respect to $B(\cdot, \cdot)|_{\frp}$. One sees easily that $D$ is independent of the choice of the orthonormal basis $\{Z_i\}_{i=1}^{m}$. Moreover, $D^2$ is a natural Laplacian. Indeed,
\begin{equation}\label{D-square}
D^2=-\Omega_{\frg}\otimes 1 +\Omega_{\frk_{\Delta}} +(\|\rho_c\|^2-\|\rho\|^2) 1\otimes 1.
\end{equation}
Here $\Omega_{\frg}$ (resp., $\Omega_{\frk_{\Delta}}$) is the Casimir operator of $\frg$ (resp., $\frk_{\Delta}$), and $\frk_{\Delta}$ is a diagonal embedding of $\frk$ into $U(\frg)\otimes C(\frp)$.

Let $S_G$ be a spin module of $C(\frp)$. Let $\widetilde{K}$ be the subgroup of $K\times {\rm Pin}(\frp_0)$ consisting of the pairs $(k, s)$ such that ${\rm Ad}(k)=p(s)$, where ${\rm Ad}: K\to O(\frp_0)$ is the adjoint action, and $p: {\rm Pin}(\frp_0)\to O(\frp_0)$ is the double covering map.
Whenever $\pi$ is unitary, by using the Hermitian form on $\pi\otimes S_G$ and  \eqref{D-square}, one deduces Parthasarathy's Dirac operator inequality:
\begin{equation}\label{Dirac-inequality-original}
\|\gamma+\rho_c\|\geq \|\Lambda\|.
\end{equation}
Here $\Lambda$ is the infinitesimal character of $\pi$, and $\gamma$ is a highest weight of any $\widetilde{K}$-type of $\pi\otimes S_G$. Whenever  there exists a $\gamma$ such that \eqref{Dirac-inequality-original} fails, one concludes that $\pi$ is \emph{not} unitary.

To sharpen the Dirac inequality, in 1997, Vogan introduced Dirac cohomology of $\pi$ as
\begin{equation}\label{Dirac-cohomology}
H_D(\pi)=\ker D/\ker D \cap {\rm Im}\, D.
\end{equation}
Since the operator $D$ is invariant under the diagonal action of $K$ given by adjoint actions on $U(\frg)$ and $C(\frp)$, it follows that $\ker D$, ${\rm Im}\, D$ and thus $H_D(\pi)$ are all $\widetilde{K}$ modules.
Vogan conjectured that whenever non-zero, the Dirac cohomology $H_D(\pi)$ should reveal the infinitesimal character of $\pi$. This conjecture was proven by Huang and Pand\v zi\'c in 2002 \cite{HP}.  We regard $\frt_f^*$ as a subspace of $\frh_f^*$ by extending the linear functionals on $\frt_f$ to be zero on $\fra_f$.

\begin{thm}{\rm (Theorem 2.3 of \cite{HP})}\label{thm-HP}
Let $\pi$ be an irreducible ($\frg$, $K$) module with infinitesimal character $\Lambda$.
Assume that $H_D(\pi)\neq 0$, and let $\gamma\in\frt_f^{*}\subset\frh_f^{*}$ be the highest weight of any $\widetilde{K}$-type in it. Then $\Lambda=w(\gamma+\rho_{c})$ for some element $w\in W(\frg,\frh_f)$.
\end{thm}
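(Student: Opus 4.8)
The plan is to derive Theorem~\ref{thm-HP} from the algebraic form of Vogan's conjecture proved by Huang and Pand\v zi\'c, whose statement involves only the pair $(\frg,\frk)$ and the Clifford algebra $C(\frp)$ and is therefore insensitive to whether $G$ is connected. Write $\caA:=U(\frg)\otimes C(\frp)$. Two inputs are needed. First, $D$ is $\frk_\Delta$-invariant, i.e.\ $[\frk_\Delta,D]=0$; since $z\otimes 1$ is central in $\caA$ and $Z(\frk_\Delta)\subseteq U(\frk_\Delta)$, both $z\otimes 1$ and every element of $Z(\frk_\Delta)$ commute with $D$, hence preserve $\ker D$ and $\im D$ and descend to endomorphisms of $H_D(\pi)$. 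Second, there is an algebra homomorphism $\zeta\colon Z(\frg)\to Z(\frk_\Delta)$ with $z\otimes 1-\zeta(z)\in D\caA+\caA D$ for every $z\in Z(\frg)$, and, under the Harish-Chandra isomorphisms identifying $Z(\frg)$ with the $W(\frg,\frh_f)$-invariant polynomial functions on $\frh_f^{*}$ and $Z(\frk_\Delta)\cong Z(\frk)$ with the $W(\frk,\frt_f)$-invariant polynomial functions on $\frt_f^{*}$, the map $\zeta$ is restriction of functions along $\frt_f^{*}\hookrightarrow\frh_f^{*}$.

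Granting these, the argument proceeds as follows. Because $\pi$ is irreducible, $Z(\frg)$ acts on $\pi$ by the scalars $\chi_\Lambda$ of its infinitesimal character, so $z\otimes 1$ acts on $\pi\otimes S_G$, hence on the subquotient $H_D(\pi)$, by $\chi_\Lambda(z)$. On the other hand $D$ is zero on $H_D(\pi)=\ker D/(\ker D\cap\im D)$; thus, writing $z\otimes 1-\zeta(z)=Da+bD$ with $a,b\in\caA$, for $w\in\ker D$ we get $(z\otimes 1-\zeta(z))w=D(aw)\in\ker D\cap\im D$, so $z\otimes 1$ and $\zeta(z)$ induce the \emph{same} operator on $H_D(\pi)$. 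Now fix a $\widetilde{K}$-type $E$ occurring in $H_D(\pi)$ and an irreducible $\widetilde{K}_0$-constituent $E_\gamma\subseteq E$ of highest weight $\gamma$ for $\Delta^{+}(\frk,\frt_f)$. Viewed as an irreducible $\frk_\Delta\cong\frk$-module, $E_\gamma$ has $\frk$-infinitesimal character $\gamma+\rho_c$, so $\zeta(z)$ acts on it by $\chi_{\gamma+\rho_c}(\zeta(z))$. Comparing this with the action of $z\otimes 1$ on $E_\gamma\subseteq H_D(\pi)$ gives $\chi_\Lambda(z)=\chi_{\gamma+\rho_c}(\zeta(z))$ for all $z\in Z(\frg)$.

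Transported through the Harish-Chandra pictures, this identity reads $p(\Lambda)=p(\gamma+\rho_c)$ for every $W(\frg,\frh_f)$-invariant polynomial $p$ on $\frh_f^{*}$, where $\gamma+\rho_c\in\frt_f^{*}$ is regarded inside $\frh_f^{*}$ by extension by $0$ on $\fra_f$. Since the algebra of $W(\frg,\frh_f)$-invariants separates the $W(\frg,\frh_f)$-orbits in $\frh_f^{*}$, we conclude $\Lambda=w(\gamma+\rho_c)$ for some $w\in W(\frg,\frh_f)$.

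I do not expect a genuine obstacle: the whole computation lives at the level of $\frk\subset\frg$ and $\caA$, and the one new feature relative to the connected case treated by Huang and Pand\v zi\'c --- that $K$, hence $\widetilde{K}$, may be disconnected --- enters only through routine points. Namely, $\chi_\Lambda$ is still a well-defined character of $Z(\frg)$ by Schur's lemma for the irreducible $(\frg,K)$-module $\pi$; a ``highest weight of a $\widetilde{K}$-type'' is taken to mean the highest weight of one of its irreducible $\widetilde{K}_0$-constituents, whose $\frk$-infinitesimal character is still $\gamma+\rho_c$; and $Z(\frk_\Delta)$, being characteristic in $U(\frk_\Delta)$, is stable under $\widetilde{K}$, so $\zeta(z)$ acts by one and the same scalar on all $\widetilde{K}_0$-constituents of a given $\widetilde{K}$-type. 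The only substantive ingredient is the algebraic Vogan conjecture together with the description of $\zeta$ as the restriction map, which I would quote from \cite{HP}; the disconnected refinement is essentially bookkeeping, so the ``hardest'' part of the plan is simply making those three points precise.
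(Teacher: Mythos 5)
Your argument is correct and is essentially the proof behind the result as the paper uses it: the paper itself gives no proof, simply quoting Theorem A of \cite{DH}, and that extension of Huang--Pand\v zi\'c to the Harish-Chandra class proceeds exactly as you propose --- the algebraic Vogan conjecture of \cite{HP} with $\zeta$ identified with restriction under the two Harish-Chandra isomorphisms, the observation that $z\otimes 1$ and $\zeta(z)$ induce the same operator on $H_D(\pi)$, and routine bookkeeping for the disconnectedness of $K$ and $\widetilde{K}$. No gaps to flag.
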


The above theorem establishes Dirac cohomology as an invariant of $\pi$. And a natural problem relevant to the study of the unitary dual is: Can we classify all the irreducible unitary representations of $G$ with non-zero Dirac cohomology? Let us denote this set by $\widehat{G}^d$, and call it the \emph{Dirac series} of $G$ (after J.-S. Huang). It is worth noting that when $\pi$ is unitary, $\ker D \cap {\rm Im}\, D=\{0\}$ and \eqref{Dirac-cohomology} simplifies as $H_D(\pi)=\ker D^2$. Moreover, it is exactly the members of $\widehat{G}$ such that the Dirac inequality, which will be recalled in \eqref{Dirac-inequality}, becomes equality that consist of the Dirac series. Therefore, $\widehat{G}^d$ cuts out the extreme members of $\widehat{G}$ in the sense of Dirac inequality, and we expect the Dirac series to be interesting. For instance it contains the discrete series, $A_{\frq}(\lambda)$ modules in the good range. Moreover, Example 6.3 of \cite{DDY} suggests that Dirac series go beyond elliptic representations. Note that the latter have applications in endoscopy. In view of the research announcement by Barbasch and Pand\v zi\'c \cite{BP19}, Dirac series should have application in the theory of automorphic forms as well.

Recently, there has been progress on the classification of Dirac series. See \cite{BDW, DW20p}. The current paper is a continuation of this task on exceptional Lie groups.  Here we study the group $E_{7(-5)}$, by which we actually mean the connected simple real exceptional Lie group \texttt{E7\_q} in \texttt{atlas}. Its basic structure will be reviewed in Section \ref{sec-EVI-structure}. Here \texttt{atlas} \cite{At} is a software which can now compute many questions pertaining to $(\frg, K)$ modules. In particular, it detects whether $\pi$ is unitarizable or not based on the algorithm in \cite{ALTV}. We will give a brief account of it in Section \ref{sec-atlas}.

Let $\pi$ be an irreducible $(\frg, K)$ module with \texttt{atlas} final parameter $p=(x, \lambda, \nu)$. We say that $\pi$ is \emph{fully supported} if the KGB element $x$ has full support. That is, if $\texttt{support(x)}=[0, 1, \dots, l-1]$, where $l=\dim_{\bbC}\, \frh_f$ is the rank of $G$. Here $0, 1, \dots, l-1$ are the labels of certain simple roots (see Section \ref{sec-atlas}). Note that $\texttt{atlas}$ always counts from zero.  We say a member of $\pi\in\widehat{G}^d$ is \emph{FS-scattered} if it is fully supported. The Dirac series of $G$ will be arranged into two parts: finitely many FS-scattered representations (which can not be cohomologically induced in the way of Theorem \ref{thm-Vogan}), and finitely many \emph{strings} (which are cohomologically induced from FS-scattered representations of certain Levi subgroups).
More careful explanation will be given in Section \ref{sec-FS-scattered}. Now let us state our main result.

\begin{thm}\label{thm-EVI}
The set $\widehat{E_{7(-5)}}^d$ consists of $103$ FS-scattered representations whose spin-lowest $K$-types are all unitarily small, and $1450$ strings of representations. Moreover, each spin-lowest $K$-type of any Dirac series of $E_{7(-5)}$ occurs exactly once.
\end{thm}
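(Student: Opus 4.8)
The plan is to argue computationally with \texttt{atlas} (see Section~\ref{sec-atlas}), combining three inputs: the description of infinitesimal characters in Theorem~\ref{thm-HP}, the sharpened Helgason-Johnson bound, and the behaviour of Dirac cohomology under cohomological induction (as in the earlier classifications \cite{BDW, DW20p}). \emph{Step 1 (finitely many infinitesimal characters for the scattered part).} Let $\pi\in\widehat{E_{7(-5)}}^d$ have infinitesimal character $\Lambda$. Since $E_{7(-5)}$ is of equal rank (its maximal compact subgroup has type $D_6\times A_1$), we have $H_f=T_f$ and $\frh_f^{*}=\frt_f^{*}$. By Theorem~\ref{thm-HP}, $\Lambda$ is $W(\frg,\frh_f)$-conjugate to $\gamma+\rho_c$, where $\gamma\in\frt_f^{*}$ is a highest weight of some $\widetilde{K}$-type of $\pi\otimes S_G$; hence $\gamma+\rho_c$ lies in a fixed coset of a lattice in $\frt_f^{*}$ and $\|\Lambda\|=\|\gamma+\rho_c\|$. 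When the \texttt{atlas} final parameter of $\pi$ is fully supported, $\pi$ is not cohomologically induced from a proper $\theta$-stable parabolic subgroup, and we would invoke the sharpened Helgason-Johnson bound for $E_{7(-5)}$ to confine $\Lambda$ to a ball of explicit radius (the bound controls the continuous part of the infinitesimal character of any unitary module, and a fully supported parameter has no unbounded ``string direction''). The lattice coset meets this ball in a finite set, so, passing to $W(\frg,\frh_f)$-orbits, the infinitesimal characters of the FS-scattered members of $\widehat{E_{7(-5)}}^d$ fall into a finite, effectively computable set $\caF$; when $\Lambda$ is integral one would use instead the improved bound established later in the paper, which shrinks $\caF$ considerably.

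\emph{Step 2 (the FS-scattered representations).} For each $\Lambda\in\caF$, we would have \texttt{atlas} list the irreducible $(\frg,K)$-modules of $E_{7(-5)}$ with infinitesimal character $\Lambda$ and $\texttt{support(x)}=[0,1,\dots,6]$, retain the unitary ones via the algorithm of \cite{ALTV}, and for each survivor $\pi$ compute $H_D(\pi)$. Since $\pi$ is unitary, $\ker D\cap{\rm Im}\,D=\{0\}$, so $H_D(\pi)=\ker D^2$, which by \eqref{D-square} is the sum of the $\widetilde{K}$-types of $\pi\otimes S_G$ whose highest weight realizes equality in \eqref{Dirac-inequality-original}; this can be read off from the $\widetilde{K}$-decomposition of $\pi\otimes S_G$. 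Keeping the $\pi$ with $H_D(\pi)\neq 0$ gives the FS-scattered list; for each one we would record its spin-lowest $K$-type, check that it is unitarily small, and tally, expecting $103$.

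\emph{Step 3 (the strings and the multiplicity-one statement).} If the KGB element of $\pi$ is not fully supported, then $\pi$ is cohomologically induced from an irreducible module $\pi_L$ of a proper $\theta$-stable Levi subgroup $L$ of $E_{7(-5)}$, and --- under the positivity conditions (good range, and, as the relevant technical conditions permit, weakly fair range) that allow the transfer of unitarity and of Dirac cohomology --- $\pi\in\widehat{E_{7(-5)}}^d$ if and only if $\pi_L\in\widehat{L}^d$; letting the continuous part of the inducing parameter run through a half-line produces a string. Running this over the finitely many conjugacy classes of proper $\theta$-stable parabolic subgroups and over the Dirac series of their Levi factors, then discarding the parameters already found in Step 2 and the duplications among different choices of $\frq$, would organize the remaining members of $\widehat{E_{7(-5)}}^d$ into $1450$ strings. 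Finally, inspecting the spin-lowest $K$-types produced in Steps 2 and 3 --- and their dependence on the parameter within each string --- should show that distinct Dirac series representations of $E_{7(-5)}$ have distinct spin-lowest $K$-types, which is the last assertion of the theorem.

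The hard part will be Step 1 together with the bookkeeping at the end of Step 3. At rank $7$ the lattice coset meets the Helgason-Johnson ball of $E_7$ in an enormous set before one quotients by $W(\frg,\frh_f)$, so the sharpened bound --- further improved when $\Lambda$ is integral --- is genuinely needed to cut $\caF$ down to a workable size; this is the computational heart of the argument. One must also verify the technical conditions (alluded to in Section~\ref{sec-FS-scattered}) under which the non-fully-supported Dirac series assemble into finitely many strings with neither overlaps nor omissions. The unitarity and Dirac-cohomology tests in Step 2 are conceptually routine given \texttt{atlas}, but at this rank and for these infinitesimal characters they are heavy, so organizing and cross-checking the output is itself a substantial part of the work.
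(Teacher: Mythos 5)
Your overall strategy (reduce to finitely many infinitesimal characters via Theorem \ref{thm-HP} and the sharpened Helgason--Johnson bound, sieve the fully supported unitary representations with \texttt{atlas}, and organize the rest into cohomologically induced strings) is the paper's strategy, but Step 1 has a genuine gap. The sharpened bound \eqref{new-HJ-bound} controls only $\|\nu\|$, i.e.\ the projection of $\Lambda$ onto the $(-1)$-eigenspace of $\theta_x$; it does not confine $\Lambda$ to a ball. Since the real rank of $E_{7(-5)}$ is $4<7$, every involution $\theta_x$ has a $(+1)$-eigenspace of dimension at least $3$, so your parenthetical claim that a fully supported parameter has ``no unbounded string direction'' is exactly the point requiring proof, and it is where the paper uses an ingredient you omit: by Salamanca-Riba \cite{Sa}, an irreducible unitary module with strongly regular real infinitesimal character is an $A_{\frq}(\lambda)$ module, and the only fully supported one is the trivial representation. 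This is what justifies restricting the search to dominant integral $\Lambda$ with $\min\{a,b,c,d,e,f,g\}=0$ (the third bullet in Section \ref{sec-FS-EII}), and it is also how the trivial representation enters the count of $103$. Only together with Lemma \ref{lemma-EVI-HP} (integrality plus the positivity constraints from Theorem \ref{thm-HP}) and the finiteness algorithm of \cite{D17,D20} does one get the finite set $\Phi$ of $723855$ characters. Two further tactical points: invoking the improved bound of Section \ref{sec-fHJ} inside Step 1 inverts the paper's order (that bound is an application; its proof happens to be independent, so this is stylistic rather than circular), and the paper does \emph{not} run \texttt{is\_unitary} and a Dirac cohomology computation over all of $\Phi$ --- for $\Phi_2,\dots,\Phi_{14}$ everything is eliminated by the Dirac inequality along Vogan pencils starting from a lowest $K$-type, with direct unitarity checks reserved for a short list of exceptions; the brute force you describe is not feasible at this scale.

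The second gap concerns the last assertion of the theorem, which you have misread. ``Each spin-lowest $K$-type of any Dirac series of $E_{7(-5)}$ occurs exactly once'' means that each spin-lowest $K$-type occurs with multiplicity one in the $K$-spectrum of that representation (as in the multiplicity-one theorem of \cite{BDW}); it does not assert that distinct Dirac series have distinct spin-lowest $K$-types, and that statement is in fact false here: in Table \ref{table-EVI-1001011} the representations attached to the KGB elements $8904$, $6815$ and $6359$ all share the spin-lowest $K$-type $[0,0,0,0,0,1,15]$. So the final sentence of your Step 3 aims at the wrong (and false) target; what must actually be verified is the multiplicity of each spin-lowest $K$-type in each FS-scattered representation (a heavy branching computation, cf.\ the acknowledgements concerning the $\#8398$ representation) and its persistence along the strings, which is how the paper establishes this part of Theorem \ref{thm-EVI}.
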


The notions unitarily small $K$-type and spin-lowest $K$-type will be recalled in Section \ref{sec-EVI-structure}. Note that besides \texttt{atlas} and the finiteness algorithm \cite{D20} for classifying Dirac series, the sharpened Helgason-Johnson bound \cite{D20} is a key tool as well.

The linear group \texttt{E7\_q} studied here has a non-linear double cover  $\widetilde{G}$. There should be interesting large families of genuine Dirac series of $\widetilde{G}$. Currently, \texttt{atlas} is unable to handle non-linear groups. But it is a valuable project for future to investigate the Dirac series of $\widetilde{G}$.

The paper is organized as follows: Section 2 collects some preliminaries for the entire paper. Section \ref{sec-EVI-structure} reviews the structure of $E_{7(-5)}$. Section 4 classifies its Dirac series. Section 5 studies their Dirac indices, while Section \ref{sec-sup} singles out the special unipotent representations. Section 7 further improves the Helgason-Johnson bound. We present all the FS-scattered representations of $E_{7(-5)}$ in Section \ref{sec-appendix}.

\section{Preliminaries}\label{sec-pre}

This section collects necessary preliminaries. Note that by Theorem \ref{thm-HP}, a necessary condition for $\pi$ to have non-zero Dirac cohomology is that $\Lambda$ is \emph{real} in the sense of \cite[Definition 5.4.1]{Vog81}. That is, $\Lambda\in i \frt_{f, 0}^* + \fra_{f, 0}^*$. If not stated otherwise, we will assume that $\Lambda$ is real henceforth.

\subsection{Cohomological induction}\label{sec-coho}
Fix an element $H\in\frt_{f, 0}$ which is dominant for $\Delta^+(\frk, \frt_f)$. Let $\frl$ be the zero eigenspace of ${\rm ad}(H)$ on $\frg$, and let $\fru$ be the positive eigenspaces.
Then
\begin{equation}\label{theta-stable-parabolic-subalgebra}
\frq=\frl+\fru
\end{equation}
is a $\theta$-stable parabolic subalgebra of $\frg$. We choose a positive root system $(\Delta^+)^\prime(\frg, \frt_f)$ containing $\Delta(\fru, \frt_f)$ and $\Delta^+(\frk, \frt_f)$. Let $L$ be the normalizer of $\frq$ in $G$. Then $L$ is also connected and has a maximally compact subgroup $L\cap K$. Set
$$
\Delta^+(\frl, \frt_f)=\Delta(\frl, \frt_f) \cap (\Delta^+)^\prime(\frg, \frt_f).
$$
Let $\rho^L$ (resp., $\rho(\fru)$, $\rho^\prime$) be the half sum of roots in $\Delta^+(\frl, \frt_f)$ (resp., $\Delta^+(\fru, \frt_f)$, $(\Delta^+)^\prime(\frg, \frt_f)$). Then
$$
\rho^\prime=\rho^L +\rho(\fru).
$$

Cohomological induction is an important way of constructing representations of $G$ from representations of $L$. Let $Z$ be an irreducible $(\frl, L\cap K)$ module with infinitesimal character
$\Lambda_Z\in i\frt_{f, 0}^*$. After \cite{KV}, we say that $Z$ is in the \emph{good range} (relative to $\frq$ and $\frg$) if
\begin{equation}\label{good-range}
B(\Lambda_Z + \rho(\fru), \alpha)>0, \quad \forall \alpha\in\Delta(\fru, \frt_f).
\end{equation}
If the above strict inequalities are replaced by $\geq$, we will say that $Z$ is \emph{weakly good}. Let $\mathfrak{z}$ be the center of $\frl$. We say that $Z$ is in the \emph{fair range} if
\begin{equation}\label{fair-range}
B(\Lambda_Z  + \rho(\fru), \alpha|_{\mathfrak{z}})>0, \quad \forall \alpha\in\Delta(\fru, \frt_f).
\end{equation}
Similarly, we say that $Z$ is \emph{weakly fair} if the above strict inequalities are replaced by $\geq$.

Put $S$ as the dimension of $\fru\cap\frk$. Cohomological induction functors (also called \emph{Zuckerman functors} and \emph{Bernstein functors}) lift $Z$ to $(\frg, K)$ modules $\caL_j(Z)$ and $\caR^j(Z)$, where $j$ is a non-negative integer. We refer the reader to Chapter V of \cite{KV} for details. Here we simply recall that the most interesting case happens at the middle degree $S$. For instance, if $Z$ is weakly good, then $\caL_j(Z)$ and $\caR^j(Z)$ vanish except for the degree $S$, while $\caL_S(Z)\cong \caR^S(Z)$ is zero or irreducible with infinitesimal character $\Lambda_Z+\rho(\fru)$.   Moreover, $\caL_S(Z)$ is unitary provided $Z$ is unitary. When $Z$ is actually in the good range, we have that $\caL_S(Z)$ must be non-zero, and it is unitary if and only if $Z$ is unitary.

In the special case that $Z$ is a one dimensional unitary character $\bbC_{\lambda}$ of $L$, we denote $\caL_S(Z)$ as $A_{\frq}(\lambda)$. This module has infinitesimal character $\lambda+\rho^\prime$. As shown by Salamanca-Riba in \cite{SV}, any irreducible unitary $(\frg, K)$ module $\pi$ with a strongly regular real infinitesimal character $\Lambda$ must be isomorphic to an $A_{\frq}(\lambda)$ module. Here $\Lambda$ being \emph{strongly regular} means that
$$
B(\Lambda-\rho^\prime, \alpha)\geq 0, \quad \forall \alpha\in(\Delta^+)^\prime(\frg, \frt_f).
$$

In the next subsection, we will recall a canonical way of doing cohomological induction in the language of \texttt{atlas}.

\subsection{The \texttt{atlas} software}\label{sec-atlas}
Let us borrow some notation from \cite{ALTV}. We embark with a complex connected  simple algebraic group $G(\bbC)$ which has finite center. Let $\sigma$ be a \emph{real form} of $G(\bbC)$ so that $G=G(\bbC)^{\sigma}$. That is, $\sigma$ is an involutive antiholomorphic Lie group automorphism of $G(\bbC)$. Let $\theta$ be the involutive algebraic automorphism of $G(\bbC)$ corresponding to $\sigma$ via the Cartan theorem (see Theorem 3.2 of \cite{ALTV}).  Denote by $K(\bbC):=G(\bbC)^{\theta}$.

Let $H(\bbC)$ be a \emph{maximal torus} of $G(\bbC)$. Denote its Lie algebra by $\frh$. Choose a Borel subgroup $B(\bbC)\supset H(\bbC)$. This gives a positive root system $\Delta^+(\frg, \frh)$ which has $l$ simple roots being labeled as $0, 1, \dots, l-1$.  A \emph{KGB element} $x$ in \texttt{atlas} is a $K(\bbC)$-orbit of the Borel variety $G(\bbC)/B(\bbC)$. The support of $x$ is a subset of $[0, 1, \dots, l-1]$ and it is given by the \texttt{atlas} command \texttt{support(x)}.

The Langlands parameter $p=(x, \lambda, \nu)$ of an irreducible $(\frg, K)$ module $\pi$ in \texttt{atlas} consists of three parts. Besides a KGB element $x$,  the other components $\lambda\in X^*+\rho$ and $\nu\in(X^*)^{-\theta}\otimes_{\bbZ} \bbC$. Here $X^*$ is the group of algebraic homomorphisms
from $H(\bbC)$ to $\bbC^{\times}$. Now a representative of the infinitesimal character of $\pi$ can be chosen as
\begin{equation}\label{inf-char}
\frac{1}{2}(1+\theta)\lambda +\nu \in\frh^*.
\end{equation}
The Cartan involution $\theta$ now becomes $\theta_x$---the involution of $x$, and the latter is given by the command \texttt{involution(x)}. All the (finitely many) irreducible $(\frg, K)$ modules with infinitsimal character \texttt{Lambda} can be constructed via the command
\begin{verbatim}
set all=all_parameters_gamma(G, Lambda)
\end{verbatim}

It is worth mentioning that any irreducible $(\frg, K)$ module $\pi$ can be realized as a cohomologically induced module from the weakly good range in the following canonical way.

\begin{thm}\label{thm-Vogan} \emph{(Vogan \cite{Vog84})}
 Let $p=(x, \lambda, \nu)$ be the \texttt{atlas} final parameter of an irreducible $(\frg, K)$-module $\pi$.
Let $S(x)$ be the support of $x$, and $\frq(x)$ be the $\theta$-stable parabolic subalgebra given by the pair $(S(x), x)$, with Levi factor $L(x)$.
Then $\pi$ is cohomologically induced, in the weakly good range,
from an irreducible $(\frl, L(x)\cap K)$-module $\pi_{L(x)}$ with final parameter $p_{L(x)}=(y, \lambda-\rho(\fru), \nu)$, where $y$ is the KGB element of $L(x)$ corresponding to the KGB element $x$ of $G$.
\end{thm}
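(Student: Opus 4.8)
This is Vogan's structure theorem for irreducible $(\frg,K)$-modules recast in the \texttt{atlas} normalization, so the plan is to reconstruct it from the standard theory of cohomological induction \cite{KV,Vog84}; the real content lies in the last two of the three steps below. \emph{Step 1 (the Levi and the candidate parameter on it).} The first task is to unwind the combinatorics of the support: $S(x)$ is the smallest set of simple roots whose generated standard Levi subgroup $L(x)\subseteq G$ already accommodates the twisted involution $\theta_x$ attached to $x$, so $x$ lies in the image of the canonical inclusion $\mathrm{KGB}(L(x))\hookrightarrow\mathrm{KGB}(G)$ with a unique preimage $y$, and minimality of $S(x)$ forces $y$ to be fully supported in $L(x)$. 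Writing $\frq(x)=\frl(x)\oplus\fru(x)$ for the attached $\theta$-stable parabolic and $S=\dim(\fru\cap\frk)$, the nilradical $\fru$ is $\theta_x$-stable, so $\rho(\fru)$ is $\theta_x$-fixed and the candidate Levi parameter $p_{L(x)}=(y,\lambda-\rho(\fru),\nu)$ has infinitesimal character
\[
\frac{1}{2}(1+\theta_x)(\lambda-\rho(\fru))+\nu=\Lambda-\rho(\fru),
\]
with $\Lambda$ the infinitesimal character of $\pi$; this is exactly the shift needed for $\caL_S(\pi_{L(x)})$ to carry infinitesimal character $\Lambda$.

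\emph{Step 2 (finality and the weakly good range).} Next one checks that $p_{L(x)}$ is a \emph{final} parameter for $L(x)$ and that the irreducible $(\frl, L(x)\cap K)$-module $\pi_{L(x)}$ it names is \emph{weakly good} relative to $\frq(x)$. Finality descends from $G$ to $L(x)$ because the integrality, parity, and dominance conditions defining it refer only to roots of $L(x)$, which are unchanged, and because $y$ is fully supported in $L(x)$ by Step 1. Weak goodness is the assertion $B(\Lambda,\alpha)\ge 0$ for every $\alpha\in\Delta(\fru,\frt_f)$, which I would read off from the (weak) dominance of the infinitesimal character inherent in an \texttt{atlas} final parameter: once $\Lambda$ is dominant for a positive system containing $\Delta(\fru,\frt_f)$ --- as in the set-up of Section \ref{sec-coho} --- the inequalities hold, but only as $\ge$ and never strictly, which is exactly why $\pi_{L(x)}$ lands in the \emph{weakly} good, and not the good, range.

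\emph{Step 3 (cohomological induction and matching of parameters).} Finally one invokes the behaviour of $\caL_S$ (equivalently $\caR^S$) in the weakly good range. First, the standard module $I_G(x,\lambda,\nu)$ is cohomologically induced from the standard module $I_{L(x)}(y,\lambda-\rho(\fru),\nu)$: this is Vogan's explicit formula for the induction of Langlands data \cite{Vog84}, and the construction of $\frq(x)$ and $p_{L(x)}$ is precisely what realizes that formula. Second, in the weakly good range $\caL_S$ is well enough behaved to carry the irreducible Langlands quotient $\pi_{L(x)}$ of $I_{L(x)}(y,\lambda-\rho(\fru),\nu)$ to the irreducible Langlands quotient $\pi$ of $I_G(x,\lambda,\nu)$: by the irreducibility statement recalled in Section \ref{sec-coho} the module $\caL_S(\pi_{L(x)})$ is zero or irreducible, and a bottom-layer $K$-type argument --- tractable because $\pi_{L(x)}$ is fully supported on $L(x)$ --- confirms it is nonzero and shares the lowest $K$-type of $\pi$. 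Uniqueness of the final parameter then yields $\caL_S(\pi_{L(x)})\cong\pi$.

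\emph{Where the difficulty lies.} All the delicate points sit at the boundary: since the induction is only weakly good, the clean good-range statements are unavailable, so the inequalities $B(\Lambda,\alpha)\ge 0$ on $\Delta(\fru,\frt_f)$ and the non-vanishing of $\caL_S(\pi_{L(x)})$ must both be extracted from the precise meaning of an \texttt{atlas} final parameter --- in particular from the dominance normalization and from the fact, established in Step 1, that after descent to $L(x)$ the parameter becomes fully supported, which is exactly the condition forbidding any further degeneration. Carrying Steps 2 and 3 through uniformly, via the lowest-$K$-type bookkeeping together with the weakly-good irreducibility criterion, is the technical heart; the whole argument is essentially that of \cite{Vog84}, repackaged through \cite{KV} and the \texttt{atlas} conventions.
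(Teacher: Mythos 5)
There is nothing in the paper to compare your argument against: Theorem \ref{thm-Vogan} is stated as a quoted result of Vogan \cite{Vog84}, in the formulation of Paul \cite{Paul}, and the paper gives no proof at all --- it only records how the statement is realized by the \texttt{atlas} commands \texttt{reduce\_good\_range} and \texttt{Levi}. So your proposal should be judged as a free-standing reconstruction, and as such it follows the expected route (descent of the KGB element and parameter to $L(x)$, weak goodness from the \texttt{atlas} dominance normalization of the infinitesimal character, then induction of standard modules plus irreducible-or-zero in the weakly good range).

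Two points deserve correction or more care. First, your remark in Step 2 that the inequalities $B(\Lambda,\alpha)\ge 0$ hold ``only as $\ge$ and never strictly'' is wrong as stated: the weakly good range contains the good range, and for many parameters (e.g.\ regular integral $\Lambda$) the inequalities on $\Delta(\fru,\frt_f)$ are strict; the correct assertion is only that strictness cannot be guaranteed in general, which is why the theorem is phrased with ``weakly good.'' Second, the genuine content of the theorem sits exactly where you wave at it: in the merely weakly good range $\caL_S$ of an irreducible module may vanish, and your Step 3 asserts non-vanishing and the matching $\caL_S(\pi_{L(x)})\cong\pi$ via ``a bottom-layer $K$-type argument, tractable because $\pi_{L(x)}$ is fully supported.'' Full support of $y$ in $L(x)$ is not what makes the bottom-layer mechanism work (bottom-layer theory applies regardless), and the argument that the lowest $K$-type of $\pi$ lies in the bottom layer of $\caL_S(\pi_{L(x)})$, forcing non-vanishing and then the identification by uniqueness of final parameters, is precisely the technical step you would need to write out (it is done in \cite{Vog84} and in the framework of \cite{KV}); as it stands your sketch assumes rather than proves it.
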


The above form was phrased by Paul \cite{Paul}. It can be realized in \texttt{atlas} as follows:
\begin{verbatim}
set p=parameter(x, lambda, nu)
set (Q, q):=reduce_good_range(p)
set L=Levi(Q)
\end{verbatim}
Here \texttt{q} is the inducing module for \texttt{p}, and \texttt{L} is the Levi factor $L(x)$. A special case is that $x$ is fully supported, i.e., $S(x)=[0, 1, \dots, l-1]$. Then $L(x)=G$ and $\frq(x)=\frg$. In other words, there is no induction in this case.

\subsection{Organizing the Dirac series}\label{sec-FS-scattered}
Theorem \ref{thm-Vogan} allows us to organize the irreducible $(\frg, K)$ modules with a fixed infinitesimal character according to their supports. Then any $\pi$ is either fully supported, or cohomologically induced within the weakly good range from a fully supported module on the Levi level.

It also hints us to organize the Dirac series of $G$ in a similar way. As shown by Theorem A of \cite{D20}, the FS-scattered representations must be finitely many, and there is an algorithm to pin down them. Once some conditions have been verified for $G$ (see Section \ref{sec-EVI-esc}), those Dirac series which are \emph{not} fully supported can be organized into finitely many strings, with each string being cohomologically induced from a FS-scattered representation of certain $L(x)$ tensored with its unitary characters.

\section{Basic structure of $E_{7(-5)}$}\label{sec-EVI-structure}

From now on, we fix $G$ as the simple real exceptional linear Lie group \texttt{E7\_q} in \texttt{atlas}. This connected group is equal rank. That is, $\frh_f=\frt_f$. It has center $\bbZ/2\bbZ$. The Lie algebra $\frg_0$ of $G$ is denoted as \texttt{EVI} in \cite[Appendix C]{Kn}.
Note that
$$
-\dim \frk +\dim \frp=-69+64=-5.
$$
Therefore, the group $G$ is also called $E_{7(-5)}$ in the literature.

We present a Vogan diagram for $\frg_0$ in Fig.~\ref{Fig-EVI-Vogan}, where $\alpha_1=\frac{1}{2}(1, -1,-1,-1,-1,-1,-1,1)$, $\alpha_2=e_1+e_2$ and $\alpha_i=e_{i-1}-e_{i-2}$ for $3\leq i\leq 7$. The black dot means that $\alpha_1$ is non-compact, while all the other simple roots are compact. By specifying a Vogan diagram, we have actually fixed a choice of positive roots $\Delta^+(\frg, \frt_f)$.  Let $\zeta_1, \dots, \zeta_7\in\frt_f^*$ be the corresponding fundamental weights for $\Delta^+(\frg, \frt_f)$. The dual space $\frt_f^*$ will be identified with $\frt_f$ under the Killing form $B(\cdot, \cdot)$.
We will use $\{\zeta_1, \dots, \zeta_7\}$ as a basis to express the \texttt{atlas} parameters $\lambda$, $\nu$ and the infinitesimal character $\Lambda$. More precisely, in such cases, $[a, b, c, d, e, f, g]$ will stand for the vector $a\zeta_1+\cdots+ g \zeta_7$.

\begin{figure}[H]
\centering
\scalebox{0.6}{\includegraphics{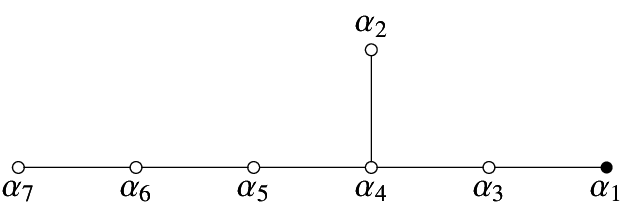}}
\caption{The Vogan diagram for EVI}
\label{Fig-EVI-Vogan}
\end{figure}

Put $\gamma_i=\alpha_{8-i}$ for $1\leq i\leq 4$, $\gamma_5=\alpha_2$, $\gamma_6=\alpha_3$ and
\begin{equation}\label{gamma-7}
\gamma_7=2\alpha_1+ 2\alpha_2+ 3\alpha_3+ 4\alpha_4+ 3\alpha_5 + 2\alpha_6+\alpha_7=
(0, 0, 0, 0, 0, 0, -1, 1),
\end{equation}
which is the highest root of $\Delta^+(\frg, \frt_f)$.
Then $\gamma_1, \dots, \gamma_7$ are the simple roots of $\Delta^+(\frk, \frt_f)=\Delta(\frk, \frt_f)\cap \Delta^+(\frg, \frt_f)$. We present the Dynkin diagram of $\Delta^+(\frk, \frt_f)$  in Fig.~\ref{Fig-EVI-K-Dynkin}.
Let $\varpi_1, \dots, \varpi_7\in \frt_f^*$ be the corresponding fundamental weights. Note that
\begin{equation}\label{rhoc}
\rho_c=-7\zeta_1+\zeta_2+\cdots+\zeta_7.
\end{equation}

\begin{figure}[H]
\centering
\scalebox{0.6}{\includegraphics{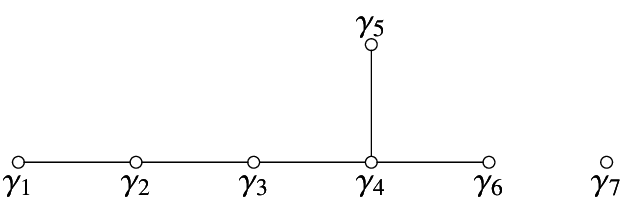}}
\caption{The Dynkin diagram for $\Delta^+(\frk, \frt_f)$}
\label{Fig-EVI-K-Dynkin}
\end{figure}

\subsection{$\frk$-types and $K$-types}\label{sec-k-type-K-type}
Let $E_{\mu}$ be the $\frk$-type with highest weight $\mu$.
We will use $\{\varpi_1, \dots, \varpi_7\}$ as a basis to express $\mu$.
Namely, in such a case, $[a, b, c, d, e, f, g]$ stands for the vector $a \varpi_1+b \varpi_2 + c \varpi_3+d \varpi_4 +e \varpi_5+ f\varpi_6+ g\varpi_7$. For instance,
\begin{equation}\label{beta}
\beta:=\alpha_1+ 2\alpha_2+ 3\alpha_3+ 4\alpha_4+ 3\alpha_5 + 2 \alpha_6 +\alpha_7=[0, 0, 0, 0, 0, 1, 1]
\end{equation}
and $\dim \frp=\dim E_{\beta}=64$. The $\frk$-type $E_{[a, b, c, d, e, f, g]}$ is self-dual. For $a, b,c ,d ,e, f, g\in\bbZ_{\geq 0}$, we have that $E_{[a, b, c, d, e, f, g]}$ is  a $K$-type if and only if
\begin{equation}\label{EVI-K-type}
a+c+f+g \mbox{ is even.}
\end{equation}

Shifting the coordinates of a $K$-type from the \texttt{atlas} fashion to the current one is a linear algebra question. We learned from Vogan that there is a clever choice of the KGB element which makes the job much easier.  Let us illustrate this for \texttt{E7\_q} (certain outputs are omitted).
\begin{verbatim}
G:E7_q
void:for x in distinguished_fiber(G) do prints(x," ",rho_c(x)) od
KGB element #0 [ -1,  1,  2, -1,  2, -1,  2 ]/1
KGB element #1 [  1,  1,  1, -1,  2, -1,  2 ]/1
......
KGB element #61 [  7,  1, -6,  1,  1,  1,  1 ]/1
KGB element #62 [ -7,  1,  1,  1,  1,  1,  1 ]/1
\end{verbatim}
Note that for \texttt{KGB(G, 62)}, the coordinates agree with those in \eqref{rhoc}. It is this KGB element that we will choose to look at the $K$-types.  See Example \ref{example-EVI-Phi8}. In particular, the linear algebra question will be answered in \eqref{atlas-shift-our}.

A final remark is that the number $62$ equals $63-1$. Recall that \texttt{atlas} counts from zero and that $|W(\frg, \frt_f)^1|=63$ (see the next subsection). This should not be an accident.

\subsection{Positive root systems and cones}
Via choosing a Vogan diagram of $\frg_0$ in Fig.~\ref{Fig-EVI-Vogan}, we have actually fixed a positive root system $\Delta^+(\frg, \frh_f)$, where $\frh_f=\frt_f$ for $G$. Denote by
$$
(\Delta^+)^{(0)}(\frg, \frt_f):=\Delta^+(\frg, \frt_f).
$$
It contains the positive root system $\Delta^+(\frk, \frt_f)$ of $\Delta(\frk, \frt_f)$ corresponding to Fig.~\ref{Fig-EVI-K-Dynkin}. We fix this $\Delta^+(\frk, \frt_f)$ once for all. Then there are $63$ positive root systems of $\Delta(\frg, \frt_f)$ containing the $\Delta^+(\frk, \frt_f)$. We enumerate them as follows:
$$
(\Delta^+)^{(j)}(\frg, \frt_f)=\Delta^+(\frk, \frt_f) \cup  (\Delta^+)^{(j)}(\frp, \frt_f), \quad 0\leq j\leq 62.
$$
For $0\leq j\leq 62$, denote by $\rho_n^{(j)}$ (resp., $\rho^{(j)}$) the half sum of roots in $(\Delta^+)^{(j)}(\frp, \frt_f)$ (resp., $(\Delta^+)^{(j)}(\frg, \frt_f)$)
and let $\pi$ be an irreducible $(\frg, K)$ module. Then of course
$$
\rho^{(j)}=\rho_n^{(j)}+\rho_c,
$$
where $\rho_c$ is the half sum of roots in $\Delta^+(\frk, \frt_f)$. Let $w^{(j)}$ be the unique element in $W(\frg, \frt_f)$ such that $w^{(j)} \rho^{(0)}=\rho^{(j)}$. Note that $w^{(0)}=e$. Denote by
$$
W(\frg, \frt)^1=\{w^{(j)}\mid 0\leq j\leq 62\}.
$$
By a result of Kostant \cite{Ko}, the multiplication map gives a bijection from $W(\frg, \frt)^1\times W(\frk, \frt_f)$ onto $W(\frg, \frt_f)$. In particular, this explains that
$$
63=|W(\frg, \frt_f)^1|=\frac{|W(\frg, \frt_f)|}{|W(\frk, \frt_f)|}=\frac{2903040}{46080}.
$$

Note that $w^{(j)}\alpha_1, \dots, w^{(j)}\alpha_7$ are the simple roots of $(\Delta^+)^{(j)}(\frg, \frt_f)$ for $0\leq j\leq 62$, and $w^{(j)}\zeta_1, \dots, w^{(j)}\zeta_7$ are the corresponding fundamental weights. We denote by $\caC^{(j)}$ (resp., $\caC$) the dominant Weyl chamber for $\Delta^+(\frg, \frt_f)$ (resp., $\Delta^+(\frk, \frt_f)$). Then $w^{(j)}\caC^{(0)}=\caC^{(j)}$ and
\begin{equation}\label{cone-deco}
\caC=\bigcup_{j=0}^{62}\caC^{(j)}.
\end{equation}

\subsection{The u-small convex hull and spin module}
The weights $\rho_n^{(j)}$ are quite important. On one hand, the convex hull formed by the $W(\frk, \frt_f)$ orbits of the weights $2\rho_n^{(j)}$, $0\leq j\leq 62$, is called the \emph{unitarily small} (\emph{u-small} for short henceforth) \emph{convex hull} by Salamanca-Riba and Vogan \cite{SV}. A $K$-type (or $\frk$-type) is called \emph{u-small} if its highest weight lies in this polyhedron. Otherwise, we will say it is \emph{u-large}.

On the other hand, since $\dim \frp=64$ is even, the Clifford algebra $C(\frp)$ has a unique irreducible module, namely, the spin module $S_G$.  By Lemma 6.9 of \cite[Chapter II]{BW}, we have that
\begin{equation}\label{spin-module}
S_G=\bigoplus_{j=0}^{62} E_{\rho_n^{(j)}}
\end{equation}
as $\frk$ modules.

\subsection{Lambda norm, LKTs and infinitesimal character}

The lambda norm of a $K$-type $\mu$ was introduced by Vogan \cite{Vog81}. Choose an index $0\leq j\leq 62$ such that $\mu+2\rho_c\in \caC^{(j)}$.
Put
\begin{equation}\label{lambda-a-mu}
\lambda_a(\mu):=P(\mu+2 \rho_c -\rho^{(j)}).
\end{equation}
Here  $P(\mu+2 \rho_c -\rho^{(j)})$ is the projection of $\mu+2 \rho_c -\rho^{(j)}$ onto the cone $\caC^{(j)}$, i.e., it denotes the unique point in the cone $\caC^{(j)}$ which is closest to the point $\mu+2 \rho_c -\rho^{(j)}$. It turns out that $\lambda_a(\mu)$ is independent of the choice of an allowable index $j$. The \emph{lambda norm} of $\mu$ is defined as
\begin{equation}\label{lambda-norm}
\|\mu\|_{\rm lambda}:=\|\lambda_a(\mu)\|.
\end{equation}
The above geometric way of equivalently describing the lambda norm in  \cite{Vog81} is due to Carmona \cite{Ca}.

Now let $\pi$ be an irreducible $(\frg, K)$ module. A $K$-type $\mu$ is called a \emph{lowest $K$-type} (LKT for short) of $\pi$ if $\mu$ occurs in $\pi$ and $\|\mu\|_{\rm lambda}$ attains the minimum among all the $K$-types of $\pi$. It is easy to see that $\pi$ has finitely many LKTs.

Being important invariants of $\pi$, its infinitesimal character and LKTs are linked in the following way: Let $\mu$ be one of the LKTs $\mu$ of $\pi$, then a representative of the infinitesimal character of $\pi$ can be chosen as
\begin{equation}\label{inf-char}
\Lambda=(\lambda_a(\mu), \nu)\in \frh^*=\frt^*+\fra^*.
\end{equation}
Here, as in \cite{Vog81}, $G(\lambda_a(\mu))$ is the isotropy group at $\lambda_a(\mu)$ for the $G$ action; $\frh$ is the complexified Lie algebra of a maximally split $\theta$-stable Cartan subgroup $H=TA$ of $G(\lambda_a(\mu))$. Note that $\nu$ in \eqref{inf-char} has the same norm as the $\nu$-part in the \texttt{atlas} final parameter $p=(x, \lambda, \nu)$ of $\pi$. We will not distinguish them by abusing the notation a bit.

\subsection{Spin norm, spin LKTs and Vogan pencil}
Inspired by Vogan's lambda norm, the second named author introduced spin norm in his 2011 HKUST thesis. See \cite{D13}. In the current setting, the \emph{spin norm}  of the $K$-type $E_{\mu}$ specializes as
\begin{equation}\label{spin-norm}
\|\mu\|_{\rm spin} =\min_{0\leq j \leq 62}\|\{\mu-\rho_n^{(j)}\}+\rho_c\|.
\end{equation}
Note that $E_{\{\mu-\rho_n^{(j)}\}}$ is the PRV component \cite{PRV} of the tensor product of $E_{\mu}$ with the contragredient $\frk$-type of $E_{\rho_n^{(j)}}$ (which is just $E_{\rho_n^{(j)}}$ in the current setting).

The spin norm of an irreducible $(\frg, K)$ module $\pi$ is defined as
$$
\|\pi\|_{\rm spin}=\min \|\delta\|_{\rm spin},
$$
where $\delta$ runs over all the $K$-types of $\pi$.
If $\delta$ attains $\|\pi\|_{\rm spin}$, we will call it a \emph{spin-lowest $K$-type} of $\pi$. Again, it is easy to see that $\pi$ has finitely many spin LKTs.

If $\pi$ is further assumed to be unitary,
the original Dirac operator inequality \eqref{Dirac-inequality-original} can be rephrased as
\begin{equation}\label{Dirac-inequality}
\|\pi\|_{\rm spin}\geq \|\Lambda\|.
\end{equation}
Moreover, as shown in \cite{D13}, the equality happens in \eqref{Dirac-inequality} if and only if $\pi$ has non-zero Dirac cohomology, and in this case, it is exactly the spin-lowest $K$-types of $\pi$ that contribute to $H_D(\pi)$.

Since \texttt{E7\_q} is not of Hermitian symmetric type, Lemma 3.4 of \cite{Vog80} says that the $K$-type $V_{\delta + n\beta}$ must show up in $\pi$ for any non-negative integer $n$ provided that $V_{\delta}$ occurs in $\pi$. We call them the \emph{Vogan pencil} starting from $V_{\delta}$.   Now it follows from \eqref{Dirac-inequality} that
\begin{equation}\label{Dirac-inequality-improved}
\|\delta+n\beta\|_{\rm spin}\geq \|\Lambda\|, \quad \forall n\in \bbZ_{\geq 0}.
\end{equation}
In other words, whenever \eqref{Dirac-inequality-improved} fails, we can conclude that $\pi$ is non-unitary. Distribution of spin norm along Vogan pencils has been discussed in Theorem C of \cite{D17}. In practice,
we will take $\delta$ to be a lowest $K$-type of $\pi$ and use the corresponding Vogan pencil to do non-unitarity test.

\section{Dirac series of $E_{7(-5)}$}\label{sec-Dirac-series}

This section aims to classify the Dirac series of $E_{7(-5)}$. Besides \texttt{atlas}, our main tools include the finiteness algorithm reported in \cite{D17}, the sharpened Helgason-Johnson bound \cite{D20}, and the way of counting strings in the Dirac series \cite{D21}.

\subsection{FS-scattered representations of $E_{7(-5)}$}\label{sec-FS-EII}

This subsection aims to sieve out all the FS-scattered Dirac series representations for $E_{7(-5)}$ using the algorithm in \cite{D17}.

\begin{lemma}\label{lemma-EVI-HP}
Let $\Lambda=a\zeta_1+b\zeta_2+c\zeta_3+d\zeta_4+e\zeta_5+f \zeta_6+g \zeta_7$ be the infinitesimal character of any Dirac series representation $\pi$ of $E_{7(-5)}$ which is dominant with respect to $\Delta^+(\frg, \frt_f)$. Then $a$, $b$, $c$, $d$, $e$, $f$, $g$ must be non-negative integers such that $a+c>0$, $b+d>0$, $c+d>0$, $d+e>0$, $e+f>0$, $f+g>0$ and $b+e+g>0$.
\end{lemma}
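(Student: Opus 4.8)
The plan is to extract the integrality and positivity constraints from the two structural facts established earlier: Theorem~\ref{thm-HP} (the Huang--Pand\v zi\'c realization of the infinitesimal character), and the fact that $\Lambda$ is assumed real (Section~\ref{sec-pre}), together with the explicit root data for $E_{7(-5)}$ recorded in Section~\ref{sec-EVI-structure}. First I would observe that, since $G$ is equal rank ($\frh_f=\frt_f$) and $\Lambda$ is real, Theorem~\ref{thm-HP} gives $\Lambda = w(\gamma+\rho_c)$ for some $w\in W(\frg,\frt_f)$ and some highest weight $\gamma$ of a $\wt K$-type of $\pi\otimes S_G$. Because $\gamma$ is the highest weight of a genuine $\wt K$-type, it lies in the appropriate weight lattice; combined with $\rho_c$ (half sum of compact positive roots) and the decomposition \eqref{spin-module} of $S_G$ into the $\frk$-types $E_{\rho_n^{(j)}}$, one sees that $\gamma+\rho_c$, hence $\Lambda$, lies in a translate of the root lattice of $\frg$. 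Making this precise for $E_7$ and then imposing dominance with respect to $\Delta^+(\frg,\frt_f)$, one reads off that the coordinates $a,\dots,g$ in the fundamental-weight basis $\{\zeta_1,\dots,\zeta_7\}$ must be non-negative integers. (The integrality here uses that $E_7$ has a two-element fundamental group, so the relevant lattice quotient is controlled; the parity of certain coordinate sums, e.g.\ the one governing \eqref{EVI-K-type}, must be tracked carefully.)

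The second half of the statement, the seven strict linear inequalities $a+c>0$, $b+d>0$, $c+d>0$, $d+e>0$, $e+f>0$, $f+g>0$, and $b+e+g>0$, is where the real content lies, and it comes from \emph{regularity} of $\Lambda$ with respect to the compact roots. The key point: if $\Lambda$ were singular with respect to some root of $\frk$, then by Theorem~\ref{thm-HP} and the Weyl-group combinatorics (Kostant's bijection $W(\frg,\frt)^1\times W(\frk,\frt_f)\cong W(\frg,\frt_f)$ recalled in Section~\ref{sec-EVI-structure}), $\gamma+\rho_c$ would be singular with respect to a compact root; but $\rho_c$ is strictly dominant for $\Delta^+(\frk,\frt_f)$ and $\gamma$ is $\Delta^+(\frk,\frt_f)$-dominant, so $\gamma+\rho_c$ is strictly $\frk$-regular --- contradiction. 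Hence $\Lambda$ is regular for every root of $\frk$. Now I would translate ``$B(\Lambda,\gamma_i)\neq 0$ for each simple root $\gamma_i$ of $\Delta^+(\frk,\frt_f)$'' into the $\zeta$-coordinates: each $\gamma_i$ expands in terms of the $\alpha_j$'s (given explicitly in Section~\ref{sec-EVI-structure}, with $\gamma_1,\dots,\gamma_4=\alpha_7,\dots,\alpha_4$, $\gamma_5=\alpha_2$, $\gamma_6=\alpha_3$, and $\gamma_7$ the highest root \eqref{gamma-7}), so pairing $\Lambda=\sum a_i\zeta_i$ against $\gamma_i$ and using $B(\zeta_i,\alpha_j)=\delta_{ij}\cdot(\text{positive})$ yields, for each $i$, that a specific non-negative-integer combination of $a,\dots,g$ is nonzero --- i.e.\ positive. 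A short bookkeeping computation with the Dynkin diagram in Fig.~\ref{Fig-EVI-K-Dynkin} then shows these seven conditions are exactly $a+c>0$, $b+d>0$, $c+d>0$, $d+e>0$, $e+f>0$, $f+g>0$, $b+e+g>0$, the last one coming from the highest-root generator $\gamma_7$.

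I expect the main obstacle to be the explicit change-of-basis arithmetic: one must correctly express each compact simple root $\gamma_i$ in the $\{\zeta_1,\dots,\zeta_7\}$ basis (equivalently, compute the matrix of compact-simple-roots against the non-compact fundamental weights), and verify that the resulting seven linear forms are precisely those listed. This is entirely mechanical but error-prone because the labeling conventions for $\alpha_i$ and $\gamma_i$ are permuted relative to one another and the highest root $\gamma_7$ contributes a long expression \eqref{gamma-7}; I would organize it as a single $7\times 7$ integer matrix computation. A secondary subtlety is confirming the integrality claim rather than mere rationality --- this requires checking that $\gamma+\rho_c$ lies in $\rho + (\text{root lattice of }\frg)$ and that, after applying $w\in W(\frg,\frt_f)$ and imposing $\Delta^+(\frg,\frt_f)$-dominance, no half-integral coordinates survive; here the structure of the $E_7$ weight lattice modulo root lattice (cyclic of order $2$) and the parity constraint \eqref{EVI-K-type} do the job. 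Everything else --- the regularity argument and the passage from Theorem~\ref{thm-HP} --- is immediate from results quoted in the excerpt.
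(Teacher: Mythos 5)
Your argument for the seven strict inequalities does not work, and the failure is at the central step. You claim that if $\Lambda$ were singular with respect to a compact root, then $\gamma+\rho_c$ would be singular with respect to a compact root. This is false: Theorem \ref{thm-HP} only gives $\Lambda=w(\gamma+\rho_c)$ with $w\in W(\frg,\frh_f)$, and such a $w$ does not preserve the set of compact roots, so singularity of $\Lambda$ against a root of $\frk$ only transfers to singularity of $\gamma+\rho_c$ against \emph{some} root of $\frg$, which is no contradiction. Moreover, the conclusion you aim for ($B(\Lambda,\gamma_i)\neq 0$ for every compact simple root $\gamma_i$) is simply not equivalent to the lemma: pairing $\Lambda=a\zeta_1+\cdots+g\zeta_7$ against $\gamma_1,\dots,\gamma_7$ gives $g,f,e,d,b,c$ and $2a+2b+3c+4d+3e+2f+g$, i.e.\ it would force $b,c,d,e,f,g>0$, which is strictly stronger than the stated conditions and is contradicted by the paper's own tables --- for instance the infinitesimal character $[1,0,0,1,0,1,1]$ of Table \ref{table-EVI-1001011} has $b=c=e=0$ and yet carries many Dirac series. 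So no ``bookkeeping computation'' can recover the listed seven sums from $\frk$-regularity of $\Lambda$ itself.

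The correct mechanism, and the one the paper uses, is a statement about the whole $W(\frg,\frt_f)$-orbit of $\Lambda$, not about $\Lambda$: nonvanishing of $H_D(\pi)$ forces an identity $\{\mu-\rho_n^{(j)}\}+\rho_c=w\Lambda$ for some $K$-type $\mu$, and the left-hand side is strictly dominant for $\Delta^+(\frk,\frt_f)$; since the $\frk$-dominant members of the orbit are exactly $w^{(j)}\Lambda$ with $w^{(j)}\in W(\frg,\frt_f)^1$ (by the cone decomposition \eqref{cone-deco}), one needs \emph{some} $w^{(j)}\Lambda$ to have all $\varpi$-coordinates nonzero. The paper then verifies by a direct check over the $63$ elements of $W(\frg,\frt_f)^1$ that, e.g., $a=c=0$ forces every $w^{(j)}\Lambda$ to have a vanishing $\varpi$-coordinate, whence $a+c>0$, and similarly for the other six sums; this finite computation (or an equivalent combinatorial argument over the cosets) is the content you are missing and cannot be bypassed by regularity of $\Lambda$ alone. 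Your integrality discussion is also vaguer than needed --- the paper gets non-negative integrality from dominance together with linearity of \texttt{E7\_q} via Remark 4.1 of \cite{D21} --- but that is a minor point compared with the gap above.
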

\begin{proof}
Since the group \texttt{E7\_q} is linear, it follows from Remark 4.1 of \cite{D21} that $a$, $b$, $c$, $d$, $e$, $f$, $g$ must be non-negative integers.

Now if $a+c=0$, i.e., $a=c=0$,  a direct check says that for any $w\in W(\frg, \frt_f)^1$, at least one coordinate of $w\Lambda$ in terms of the basis $\{\varpi_1, \dots, \varpi_7\}$ vanishes. Therefore,
$$
\{\mu-\rho_n^{(j)}\} + \rho_c =w \Lambda
$$
could not hold for any $K$-type $\mu$. This proves that $a+c>0$. Other inequalities can be similarly deduced.
\end{proof}

To obtain all the FS-scattered Dirac series representations of $E_{7(-5)}$, now it suffices to consider all the infinitesimal characters $\Lambda=[a, b, c, d, e, f, g]$ such that
\begin{itemize}
\item[$\bullet$] $a$, $b$, $c$, $d$, $e$, $f$, $g$ are non-negative integers;
\item[$\bullet$] $a+c>0$, $b+d>0$, $c+d>0$, $d+e>0$, $e+f>0$, $f+g>0$, $b+e+g>0$;
\item[$\bullet$] $\min\{a, b, c, d, e, f, g\}=0$;
\item[$\bullet$] there exists a fully supported KGB element $x$ such that $\|\frac{\Lambda-\theta_x\Lambda}{2}\|\leq \sqrt{\frac{227}{2}}$.
\end{itemize}
Let us collect them as $\Phi$. Note that the first item and the second item are guaranteed by Lemma \ref{lemma-EVI-HP}. The third item uses the main result of  Salamanca-Riba \cite{Sa} which is recalled at the end of Section \ref{sec-coho}.  For the fourth item above, we note that if $\texttt{p}$ is an irreducible representation in $\texttt{atlas}$ with final parameter $(x, \lambda, \nu)$ and with infinitesimal character $\Lambda$ which is dominant, then
\begin{equation}\label{nu}
\nu=\frac{\Lambda-\theta_x(\Lambda)}{2},
\end{equation}
where $\theta_x$ is \texttt{involution(x)} in \texttt{atlas}. Therefore,  if the representation \texttt{p} is infinite-dimensional and unitary, by \cite{D20}, we must have
\begin{equation}\label{new-HJ-bound}
\|\nu\|\leq \sqrt{\frac{227}{2}}.
\end{equation}
This bound is attained on the first entry of Table \ref{table-EVI-1110111}, which is the minimal representation.
On the other hand, the original Helgason-Johnson upper bound for $\|\nu\|$ is $\|\rho\|=\sqrt{\frac{399}{2}}$ (see \cite{HJ}). This improvement greatly reduces the cardinality of $\Phi$, which turns out to be $723855$.
As put in Remark \ref{rmk-example-EVI-Phi8} below, the sharpened Helgason-Johnson bound will continue to be helpful in doing non-unitarity test.

Let us collect all the members of $\Phi$ whose largest coordinate equals to $i$ as $\Phi_i$. Then $\Phi$ is partitioned into $\Phi_1, \dots, \Phi_{14}$. More precisely, we have the following

\begin{center}
\begin{tabular}{c|c|c|c|c|c|c}
$\#\Phi_1$ & $\#\Phi_2$ & $\#\Phi_3$ & $\#\Phi_4$ & $\#\Phi_5$ & $\#\Phi_6$ & $\#\Phi_7$  \\
\hline
$32$ & $1056$  & $8470$ & $35765$ & $87593$ & $132922$ & $142493$\\
\hline
$\#\Phi_8$  & $\#\Phi_9$ & $\#\Phi_{10}$& $\#\Phi_{11}$ & $\#\Phi_{12}$ & $\#\Phi_{13}$ & $\#\Phi_{14}$ \\
\hline
$122007$ & $89151$ & $55349$ & $30432$ & $13410$ & $4494$ & $681$
\end{tabular}
\end{center}
The elements of $\Phi_1$ are listed as follows:
\begin{align*}
&[0, 0, 1, 1, 0, 1, 1], [0, 0, 1, 1, 1, 0, 1], [0, 0, 1, 1, 1, 1, 0], [0, 0, 1, 1, 1, 1, 1], \\
&[0, 1, 1, 0, 1, 0, 1], [0, 1, 1, 0, 1, 1, 0], [0, 1, 1, 0, 1, 1, 1], [0, 1, 1, 1, 0, 1, 0], \\
&[0, 1, 1, 1, 0, 1, 1], [0, 1, 1, 1, 1, 0, 1], [0, 1, 1, 1, 1, 1, 0], [0, 1, 1, 1, 1, 1, 1], \\
&[1, 0, 0, 1, 0, 1, 1], [1, 0, 0, 1, 1, 0, 1], [1, 0, 0, 1, 1, 1, 0], [1, 0, 0, 1, 1, 1, 1], \\
&[1, 0, 1, 1, 0, 1, 1], [1, 0, 1, 1, 1, 0, 1], [1, 0, 1, 1, 1, 1, 0], [1, 0, 1, 1, 1, 1, 1], \\
&[1, 1, 0, 1, 0, 1, 0], [1, 1, 0, 1, 0, 1, 1], [1, 1, 0, 1, 1, 0, 1], [1, 1, 0, 1, 1, 1, 0], \\
&[1, 1, 0, 1, 1, 1, 1], [1, 1, 1, 0, 1, 0, 1], [1, 1, 1, 0, 1, 1, 0], [1, 1, 1, 0, 1, 1, 1], \\
&[1, 1, 1, 1, 0, 1, 0], [1, 1, 1, 1, 0, 1, 1], [1, 1, 1, 1, 1, 0, 1], [1, 1, 1, 1, 1, 1, 0].
\end{align*}
A careful study of the irreducible unitary representations under the above $32$ infinitesimal characters leads us to Section \ref{sec-appendix}. Although $\Phi_1$ only occupies a very small portion (about 0.0044\%) of $\Phi$, for us the most interesting story turns out to happen \emph{within} $\Phi_1$.

Indeed, let $\Pi_{\rm FS}(\Lambda)$ (resp., $\Pi_{\rm FS}^{\rm u}(\Lambda)$) be the set of all the fully supported irreducible (resp., unitary) representations with infinitesimal character $\Lambda$.
Conjecture 1.4 of \cite{DDH} says that $\Pi_{\rm FS}^{\rm u}(\Lambda)$ should be empty for any $\Lambda\in\Phi$ which has a coordinate bigger than $1$.

\begin{lemma}\label{lemma-b2-empty}
Let $G$ be $E_{7(-5)}$. Then $\Pi_{\rm FS}^{\rm u}(\Lambda)$ is empty for any $\Lambda\in\Phi_i$ for $2\leq i\leq 14$.
\end{lemma}

The lemma is actually obtained by tedious calculations, whose explanation will occupy the remaining part of this subsection. Currently, these calculations seem to be almost impossible for exceptional groups with rank $8$. Thus a more clever way will be appreciated. For this, the last few minutes of \cite{Vog22} is inspirational.

To arrive at Lemma \ref{lemma-b2-empty}, for the elements $\Lambda$ in $\Phi_i$, $2\leq i\leq 14$, we will mainly use Parthasarathy's Dirac operator inequality, and distribution of spin norm along the Vogan pencil starting from one lowest $K$-type to verify that $\Pi_{\rm FS}^{\rm u}(\Lambda)$ is empty. This method turns out to be very effective in non-unitarity test.

\begin{example}\label{example-EVI-Phi8}
Let us consider the infinitesimal character $\Lambda=[1, 1, 1, 1, 1, 8, 0]$ in $\Phi_8$.
\begin{verbatim}
G:E7_q
set all=all_parameters_gamma(G, [1, 1, 1, 1, 1, 8, 0])
#all
Value: 4668
set allFS=## for p in all do if #support(p)=7 then [p] else [] fi od
#allFS
Value: 2423
\end{verbatim}
Therefore, there are $4668$ irreducible representations under $\Lambda$, while $|\Pi_{\rm FS}(\Lambda)|=2423$.

Now let us compare the original Helgason-Johnson bound and the sharpened one. Since it is easier to enter integers (rather than rational numbers) into \texttt{atlas}, we collect the transposed $2 \zeta_1, 2\zeta_2, \dots, 2\zeta_7$ as \texttt{TgFWts}. Then the command
\begin{verbatim}
(nu(p)*TgFWts)*(nu(p)*TgFWts)<=4*399/2}
\end{verbatim}
below is equivalent to $\|\nu\|\leq\sqrt{\frac{399}{2}}$.
\begin{verbatim}
set HJ=227/2
set TgFWts=mat: [[0, 1, -1, 0, 0, 0, 0], [0, 1, 1, 0, 0, 0, 0],
[0, 1, 1, 2, 0, 0, 0], [0, 1, 1, 2, 2, 0, 0], [0, 1, 1, 2, 2, 2, 0],
[0, 1, 1, 2, 2, 2, 2], [-2, -2, -3, -4, -3, -2, -1], [2, 2, 3, 4, 3, 2, 1]]
set oldHJ=##for p in allFS do if (nu(p)*TgFWts)*(nu(p)*TgFWts)
<=4*399/2 then [p] else [] fi od
#oldHJ
Value: 214
set newHJ=##for p in allFS do if (nu(p)*TgFWts)*(nu(p)*TgFWts)
<=4*HJ then [p] else [] fi od
#newHJ
Value: 4
\end{verbatim}
Therefore, there are $214$ fully  supported irreducible representations satisfying $\|\nu\|\leq \sqrt{\frac{399}{2}}$, while only $4$ of them further satisfies $\|\nu\|\leq \sqrt{\frac{227}{2}}$.

Finally, let us look at one of the LKTs of the above four representations.
\begin{verbatim}
set x62=KGB(G,62)
void: for p in newHJ do print(highest_weight(LKTs(p)[0],x62)) od
((),KGB element #62,[ -9,  1,  1,  5,  1,  5,  1 ])
((),KGB element #62,[ -8,  2,  0,  4,  1,  6,  2 ])
((),KGB element #62,[ -10, 2,  2,  2,  1,  8,  2 ])
((),KGB element #62,[ -8,  1,  0,  6,  0,  5,  2 ])
\end{verbatim}
Here we note that when a $K$ type has coordinate $y_1, y_2, \dots, y_7$ using the KGB element \texttt{x62} above, then its highest weight in terms of $\varpi_1, \dots, \varpi_7$ (see Section \ref{sec-k-type-K-type}) is
\begin{equation}\label{atlas-shift-our}
[y_7, y_6, y_5, y_4, y_2, y_3, 2 y_1 + 2 y_2 + 3 y_3 + 4 y_4 + 3 y_5 + 2 y_6 + y_7].
\end{equation}
For instance, the LKTs are
$$
[1, 5, 1, 5, 1, 1, 21], \quad [2, 6, 1, 4, 2, 0, 21], \quad [2, 8, 1, 2, 2, 2, 19], \quad [2, 5, 0, 6, 1, 0, 22].
$$
The minimum spin norm along the Vogan pencils starting from them are
$$
\sqrt{594}, \sqrt{596}, \sqrt{600}, \sqrt{606}
$$
respectively, while $\|\Lambda\|=\sqrt{706}$. Therefore, Dirac inequality says that none of them is unitary, i.e., $\Pi_{\rm FS}^{\rm u}([1, 1, 1, 1, 1, 8, 0])=\emptyset$.
\hfill\qed
\end{example}
\begin{rmk}\label{rmk-example-EVI-Phi8}
Since \texttt{atlas} takes some time to calculate the LKTs, adopting the sharpened Helason-Johnson bound (thus reducing the number of representations for which we need to print their LKTs) saves us a lot of time.
\end{rmk}

The above method fails on $60$ members of $\Phi$:
$$
[0,0,1,1,0,1,2], [0,0,1,1,0,2,1], [0,0,1,1,1,0,2],
\dots, [3,1,0,1,0,1,0], [1, 1, 0, 1, 0, 1, 4].
$$
However, a more careful look says that there is no fully supported irreducible unitary representation under them. Let us provide one example.

\begin{example}
Consider the infinitesimal character
$\Lambda=[0, 0, 1, 1, 1, 0, 2]$ for $\texttt{E7\_q}$.

\begin{verbatim}
G:E7_q
set all=all_parameters_gamma(G,[0,0,1,1,1,0,2])
#all
Value: 1085
set allFS=## for p in all do if #support(p)=7 then [p] else [] fi od
#allFS
Value: 609
\end{verbatim}
Therefore $|\Pi_{\rm FS}(\Lambda)|=609$.

The following suggests that the bound $227/2$ does not work for \texttt{allFS}:
\begin{verbatim}
set HJ=227/2
set newHJ=##for p in allFS do if (nu(p)*TgFWts)*(nu(p)*TgFWts)
<=4*HJ then [p] else [] fi od
#newHJ
Value: 609
\end{verbatim}
The \texttt{TgFWts} involved above is the same as in Example \ref{example-EVI-Phi8} and is thus omitted.

A careful look at \texttt{newHJ} says that the non-unitarity test using the pencil starting from one LKT fails only for the representations \texttt{newHJ[187]} and \texttt{newHJ[294]}. Indeed, \texttt{newHJ[187]} has a unique LKT $\delta=[0, 1, 0, 0, 4, 0, 0]$, and the minimum spin norm along the pencil $\{\delta+n\beta\mid n\in\bbZ_{\geq 0}\}$ is $\sqrt{\frac{223}{2}}$, while $\|\Lambda\|=\sqrt{\frac{215}{2}}$. The representation \texttt{newHJ[294]} has a unique LKT $\delta^\prime=[6, 0, 0, 0, 0, 0, 10]$, and the minimum spin norm along the pencil $\{\delta^\prime+n\beta\mid n\in\bbZ_{\geq 0}\}$ is $\sqrt{\frac{231}{2}}$. Thus Dirac inequality does not work for them. Instead, we check their unitarity directly:
\begin{verbatim}
is_unitary(newHJ[187])
Value: false
is_unitary(newHJ[294])
Value: false
\end{verbatim}
Thus $\Pi_{\rm FS}^{\rm u}(\Lambda)=\emptyset$.
\hfill\qed
\end{example}

\subsection{Counting the strings in $\widehat{E_{7(-5)}}^d$}\label{sec-EVI-esc}

Firstly, let us verify that Conjecture 2.6 of \cite{D21} and the binary condition hold for $E_{7(-5)}$.

\begin{example}
Consider the case that $\texttt{support(x)=[1, 2, 3, 4, 5, 6]}$. There are $518$ such KGB elements in total. We compute that there are $138546$ infinitesimal characters $\Lambda=[a, b, c, d, e, f, g]$ in total such that
\begin{itemize}
\item[$\bullet$] $b$, $c$, $d$, $e$, $f$, $g$ are non-negative integers, $a=0$;
\item[$\bullet$] $a+c>0$, $b+d>0$, $c+d>0$, $d+e>0$, $e+f>0$, $f+g>0$ and $b+e+g>0$;
\item[$\bullet$] there exists a  KGB element $x$ with support $[1, 2, 3, 4, 5, 6]$ such that $\|\frac{\Lambda-\theta_x\Lambda}{2}\|\leq \sqrt{\frac{227}{2}} $.
\end{itemize}
As in Section \ref{sec-FS-EII}, we exhaust all the irreducible unitary representations under these infinitesimal characters  with the above $518$ KGB elements. It turns out that such representations occur only when $b, c, d, e, f, g=0$ or $1$.  Then we check that each $\pi_{L(x)}$ is indeed unitary. \hfill\qed
\end{example}

All the other non fully supported KGB elements are handled similarly. Thus Conjecture 2.6 of \cite{D21} and the binary condition hold for $E_{7(-5)}$.

Now we use the formula in Section 5 of \cite{D21} to pin down the number of strings in $\widehat{E_{7(-5)}}^d$.
We compute that
\begin{align*}
&N([0,1,2,4,5,6])=1, \quad N([0,1,2,3,5,6])=8, \quad N([0,1,3,4,5,6])=11, \\
&N([0,1,2,3,4,6])=15, \quad N([0,2,3,4,5,6])=54, \quad N([1,2,3,4,5,6])=83,\\
&N([0,1,2,3,4,5])=97.
\end{align*}
In particular, it follows that $N_6=259$. We also compute that
$$
N_0=63, \quad N_1=112, \quad N_2=156, \quad N_3=226, \quad N_4=300, \quad N_5=334.
$$
Therefore, the total number of strings for $E_{7(-5)}$ is equal to
$$
\sum_{i=0}^{6} N_i=1450.
$$

To end up with this section, we mention that some auxiliary files have been built up to facilitate the classification of the Dirac series of $E_{7(-5)}$. They are available via the following link:
\begin{verbatim}
https://www.researchgate.net/publication/360060854_EVI-Files
\end{verbatim}

\section{Dirac index}\label{sec-DI}

Let $\pi$ be an irreducible unitary $(\frg, K)$ module. Let $H_D^+(\pi)$ (resp., $H_D^-(\pi)$) denotes the even (resp., odd) part of the Dirac cohomology $H_D(\pi)$. Then the \emph{Dirac index} of $\pi$ is defined as the virtual $\widetilde{K}$ module
\begin{equation}
{\rm DI}(\pi)=H_D^+(\pi) - H_D^-(\pi).
\end{equation}

The definition suggests a direct way of computing the Dirac index from knowledge of the Dirac cohomology: for any $\widetilde{K}$ type $\gamma$ living in $H_D(\pi)$, it remains to detect whether $\gamma$ lives in $\pi\otimes S^+_G$ or $\pi\otimes S^-_G$. The latter  is further given by the parity of $l(w^{(j)})$, where $\rho_n^{(j)}:=w^{(j)}\rho-\rho_c$, $0\leq j\leq s-1$, are the components of $S_G$ as $\frk$ types (see Lemma 2.3 of \cite{DW21}). A more efficient way of computing Dirac index is given by \cite{MPVZ}. Its \texttt{atlas} realization is the following command:
\begin{verbatim}
show_Dirac_index(p)
\end{verbatim}

The first $\pi$ for which cancellation happens between $H_D^+(\pi)$ and $H_D^-(\pi)$ is found on the group \texttt{F4\_s} \cite{DDY}. Later, more such representations are reported on \texttt{E6\_q} \cite{D21}.
Currently, for \texttt{E7\_q}, there are $13$ FS-scattered representations in total for which cancellation takes place within their Dirac cohomology when passing to Dirac index. We will put stars on the KGB elements of these representations. See Section \ref{sec-appendix}. Up to now, whenever cancellation happens, the Dirac index \emph{vanishes}. This is the motivation of Conjecture 1.4 of \cite{D21}.

\begin{example}\label{exam-EVI-cancellation}
Consider the  representation $\pi$ with the following final parameter
\begin{verbatim}
(x=2655,lambda=[2,-1,-1,4,-3,1,3]/1,nu=[4,-1,-4,6,-6,0,5]/2)
\end{verbatim}
It has infinitesimal character $[1,0,0,1,0,1,1]$, which is conjugate to $\rho_c$ under the action of $W(\frg, \frt_f)$. The representation $\pi$ has two spin LKTs:
$$
[1, 0, 1, 0, 0, 4, 2]=\rho_n^{(47)}, \quad  [2, 0, 0, 1, 0, 3, 3]=\rho_n^{(41)}.
$$
Therefore, $H_D(\pi)$ consists of two copies of the trivial $\widetilde{K}$-type.
Note that
$$
w^{(47)}=s_1s_3s_4s_5s_6s_7s_2s_4s_5s_6s_3s_4s_5s_2, \quad
w^{(41)}=s_1s_3s_4s_5s_6s_7s_2s_4s_5s_6s_3s_4s_2.
$$
In particular, $w^{(47)}$ has length $14$ and $w^{(41)}$ has length $13$. Thus one trivial $\widetilde{K}$-type lives in the even part of $H_D(\pi)$,
while the other lives in the odd part of $H_D(\pi)$. See Lemma 2.3 of \cite{DW21}. As a consequence, the Dirac index of $\pi$ vanishes.

On the other hand, it is much quicker to calculate ${\rm DI}(\pi)$ by \texttt{atlas} using \cite{MPVZ} (certain outputs are omitted):
\begin{verbatim}
G:E7_q
set p=parameter(KGB(G,2655),[2,-1,-1,4,-3,1,3],[4,-1,-4,6,-6,0,5]/2)
show_dirac_index(p)
Dirac index is 0
\end{verbatim}
which agrees with the earlier calculation.\hfill\qed
\end{example}

\begin{example}\label{EVI-minimal}
The first entry of Table \ref{table-EVI-1110111} is the minimal representation. Its $K$-types are
$$
\{\mu_n:=\mu_0 + n\beta\mid n\in\bbZ_{\geq 0} \},
$$
where $\mu_0=[0,0,0,0,0,0,4]$ is the unique LKT of $\pi$. We compute that its spin LKTs are $\mu_1, \mu_2, \dots, \mu_6$.
Moreover, both $\mu_1$ and $\mu_6$ contribute a single $\widetilde{K}$-type $[0, 0, 0, 0, 0, 0, 10]$, but with opposite signs. Both $\mu_2$ and $\mu_5$ contribute the following multiplicity-free $\widetilde{K}$-types:
$$
[0, 1, 0, 0, 0, 0, 8], \quad [1, 0, 0, 0, 1, 0, 7], \quad [0, 0, 0, 0, 2, 0, 6].
$$
However, their signs differ on each $\widetilde{K}$-type. Similarly, both $\mu_3$ and $\mu_4$ contribute the following multiplicity-free $\widetilde{K}$-types:
\begin{align*}
&[2, 0, 0, 0, 0, 1, 5], \quad [1, 0, 0, 0, 1, 1, 4], \quad [2, 1, 0, 0, 0, 0, 4], \quad [1, 1, 0, 0, 1, 0, 3],\\
&[0, 0, 0, 1, 0, 1, 3], \quad [0, 1, 0, 1, 0, 0, 2], \quad [0, 0, 2, 0, 0, 0, 0], \quad [2, 0, 0, 1, 0, 0, 0],\\
&[0, 1, 0, 0, 0, 2, 0], \quad [3, 0, 0, 0, 1, 0, 1], \quad [0, 0, 0, 0, 0, 3, 1], \quad [4, 0, 0, 0, 0, 0, 2].
\end{align*}
However, their signs differ on each $\widetilde{K}$-type.
Therefore, the Dirac index of the minimal representation vanishes.

Theorem 6.2 of \cite{DW21} is also helpful here. Indeed, we note that
$$
B(\mu_n-\mu_0, \zeta_7)=B(n\beta, \zeta_7)=n, \quad \forall n\in\bbZ_{\geq 0}.
$$
In particular, it says that $\mu_1/\mu_6$, $\mu_2/\mu_5$, $\mu_3/\mu_4$ have different parities.  \hfill\qed
\end{example}

\section{Special unipotent representations}\label{sec-sup}

In the list \cite{LSU} offered by Adams, the group \texttt{E7\_q} has $56$ \emph{special unipotent representations} (in the sense of \cite{BV}) in total. Ten of them are also fully supported Dirac series representations. We mark them with the subscript $\clubsuit$ in Section \ref{sec-appendix}. Note that the representation in Table \ref{table-EVI-1111111} is the trivial one, and the first entry of Table \ref{table-EVI-1110111} is the minimal representation.

The other eight representations are all $A_{\frq}(\lambda)$ modules. A summary is given below.
Here the first column gives the position of the representation. In the second column, firstly we present the Lie algebra of the connected $L$, the Levi factor of a proper $\theta$-stable parabolic subalgebra $\frq$ of $\frg$. Then we indicate how to get the unitary character $\bbC_{\lambda}$ from the trivial representation \texttt{tL} of $L$. For instance, $a\downarrow 9$ means we should minus the first coordinate of \texttt{lambda(tL)} by $9$. Finally, we give the range of the inducing module $\bbC_{\lambda}$.

\begin{center}
\begin{tabular}{c|c}
special unipotent representation & realization as an $A_q(\lambda)$ module \\
\hline
1st entry of Table \ref{table-EVI-1001011} &  so(8,4).u(1), $a\downarrow 9$, None\\
\hline
2nd entry of Table \ref{table-EVI-1001011} & so(8,4).u(1), $a\downarrow 8$, Fair\\
\hline
3rd entry of Table \ref{table-EVI-1001011} & so*(12)[0,1].u(1), $a\downarrow 9$, None\\
\hline
4th entry of Table \ref{table-EVI-1001011} & so*(12)[0,1].u(1), $a\downarrow 8$, Fair\\
\hline
6th entry of Table \ref{table-EVI-1001011} & so(12).u(1), $a\downarrow 9$, None\\
\hline
17th entry of Table \ref{table-EVI-1001011} & so(12).u(1), $a\downarrow 8$, Fair\\
\hline
1st entry of Table \ref{table-EVI-1110101} & so*(12)[0,1].u(1), $g\downarrow 8$, Fair\\
\hline
2nd entry of Table \ref{table-EVI-1110101} & so(12).u(1), $g\downarrow 8$, Fair\\
\hline
\end{tabular}
\end{center}

\begin{example}\label{EVI-su-Aq}
Let us explain the second row of the above table.
\begin{verbatim}
G:E7_q
set P=KGP(G,[1,2,3,4,5,6])
set L=Levi(P[2])
L
Value: connected real group with Lie algebra 'so(8,4).u(1)'
set tL=trivial(L)
set tm8=parameter(x(tL),lambda(tL)-[8,0,0,0,0,0,0],nu(tL))
goodness(tm8,G)
Value: "Fair"
theta_induce_irreducible(tm8,G)
Value:
1*parameter(x=8916,lambda=[3,0,0,1,0,3,1]/1,nu=[2,0,-1,1,0,2,0]/1) [27]
\end{verbatim}
Other rows are interpreted similarly. \hfill\qed
\end{example}

\section{Improving the Helgason-Johnson bound for $E_{7(-5)}$}\label{sec-fHJ}

As noted in Section \ref{sec-Dirac-series},
the sharpened Helgason-Johnson bound \eqref{new-HJ-bound} has improved the computation efficiency greatly, thus making the classification of $\widehat{E_{7(-5)}}^d$ finishes in about eight months. Now let us further sharpen the Helgason-Johnson bound, hoping that the result may help people to explore (part of) the unitary dual of $E_{7(-5)}$, and that the method will help other cases.

\begin{prop}\label{prop-HJ}
Let $G$ be $E_{7(-5)}$, and let $\pi$ be an irreducible unitary $(\frg, K)$ module with infinitesimal character $\Lambda$ which is given by \eqref{inf-char}. Assume that $\Lambda$ is a non-negative integer combination of $\zeta_1, \dots, \zeta_7$. If
\begin{equation}\label{further-HJ-bound}
\|\nu\| \geq \sqrt{\frac{165}{2}},
\end{equation}
then $\pi$ is either trivial or minimal.
\end{prop}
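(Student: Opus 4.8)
The plan is to reduce the statement to a finite computation. By Theorem~\ref{thm-HP} and the standing assumption that $\Lambda$ is real and dominant, the only relevant infinitesimal characters are those listed in Lemma~\ref{lemma-EVI-HP}, i.e.\ $\Lambda=[a,b,c,d,e,f,g]$ with $a,\dots,g\in\bbZ_{\geq0}$ subject to the seven linear inequalities $a+c>0$, $b+d>0$, $c+d>0$, $d+e>0$, $e+f>0$, $f+g>0$, $b+e+g>0$. For an irreducible $\pi$ with \texttt{atlas} final parameter $(x,\lambda,\nu)$ and dominant $\Lambda$, \eqref{nu} gives $\nu=\tfrac12(\Lambda-\theta_x\Lambda)$, so the hypothesis \eqref{further-HJ-bound} combined with the sharpened Helgason--Johnson bound \eqref{new-HJ-bound} forces
$$
\sqrt{\tfrac{165}{2}}\;\leq\;\|\nu\|\;\leq\;\sqrt{\tfrac{227}{2}}.
$$
First I would show that the set of $(\Lambda,x)$ with $\Lambda$ as above and $x$ a KGB element satisfying $\tfrac{165}{2}\leq \|\tfrac{\Lambda-\theta_x\Lambda}{2}\|^2\leq \tfrac{227}{2}$ is finite and explicitly enumerable in \texttt{atlas}: the upper bound already appears in the construction of $\Phi$ in Section~\ref{sec-FS-EII}, so this is a sub-collection of an already-understood finite list, refined by a \emph{lower} bound on $\|\nu\|$ that drastically cuts it down.

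Next, for each such $\Lambda$, I would run \texttt{all\_parameters\_gamma(G,$\Lambda$)} to list all irreducible $(\frg,K)$ modules with that infinitesimal character, keep only those whose $\nu$ has norm at least $\sqrt{165/2}$ (equivalently, whose KGB element $x$ satisfies the norm inequality on $\tfrac{\Lambda-\theta_x\Lambda}{2}$), and then apply the non-unitarity tests already deployed in Section~\ref{sec-Dirac-series}: Parthasarathy's Dirac inequality \eqref{Dirac-inequality} together with the distribution of spin norm along the Vogan pencil \eqref{Dirac-inequality-improved} starting from a lowest $K$-type, and, for the residual cases, \texttt{atlas}'s \texttt{is\_unitary}. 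The claim is that after discarding everything non-unitary, the surviving representations are exactly the trivial representation (for $\nu=0$, which is excluded by \eqref{further-HJ-bound} unless $\Lambda=\rho$, but one still sees it as the degenerate endpoint) and the minimal representation, which is the first entry of Table~\ref{table-EVI-1110111} and has $\|\nu\|=\sqrt{227/2}$. So the value $\sqrt{165/2}$ is precisely the threshold below which the next-largest $\|\nu\|$ among unitary representations sits; the bound is sharp in the sense that one must check no unitary $\pi$ other than the minimal one has $\|\nu\|$ in the half-open interval $[\sqrt{165/2},\sqrt{227/2})$.

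The main obstacle is the size of the search: unlike the Dirac series classification, here we are \emph{not} restricted to fully supported $x$, so for each admissible $\Lambda$ one must examine all KGB elements, and the non-fully-supported representations are handled by Theorem~\ref{thm-Vogan}, passing to the Levi $L(x)$ and testing unitarity there via cohomological induction in the weakly good (or weakly fair) range as in Section~\ref{sec-EVI-esc}. The delicate point is making the lower bound $\sqrt{165/2}$ genuinely optimal: one has to confirm that among the finitely many candidates exactly one non-trivial unitary representation (the minimal one) survives, which requires that the spin-norm/Vogan-pencil test, supplemented by direct \texttt{is\_unitary} calls on the handful of recalcitrant parameters, conclusively eliminates every other candidate. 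I expect the Dirac inequality to dispose of the overwhelming majority, with only a small, explicitly listed set of exceptions requiring the slower direct check, exactly as in Example~\ref{example-EVI-Phi8} and the example following Remark~\ref{rmk-example-EVI-Phi8}.
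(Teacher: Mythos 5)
Your reduction to a finite \texttt{atlas} search has two genuine gaps. First, you restrict the infinitesimal characters to those of Lemma \ref{lemma-EVI-HP}, invoking Theorem \ref{thm-HP}; but that lemma only constrains the infinitesimal characters of representations with \emph{non-zero Dirac cohomology}, whereas Proposition \ref{prop-HJ} concerns an arbitrary irreducible unitary module with dominant integral infinitesimal character --- nothing in the hypothesis forces $H_D(\pi)\neq 0$, so the inequalities $a+c>0$, $b+d>0$, etc.\ cannot be assumed. Second, and more seriously, the set of pairs $(\Lambda,x)$ with $\Lambda$ dominant integral and $\tfrac{165}{2}\leq\|\tfrac{\Lambda-\theta_x\Lambda}{2}\|^2\leq\tfrac{227}{2}$ is \emph{not} finite: the window only constrains the part of $\Lambda$ moved by $\theta_x$, while the $\theta_x$-fixed part can grow without bound, so $\|\Lambda\|$ is uncontrolled and one cannot run \texttt{all\_parameters\_gamma} over all candidates. (In the construction of $\Phi$ in Section \ref{sec-FS-EII}, finiteness comes from the extra requirement $\min\{a,\dots,g\}=0$, which is supplied by Salamanca-Riba's theorem for the Dirac series setting; that device is unavailable here.)

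The paper closes exactly this hole by first constraining the lowest $K$-type. Taking a LKT $\mu$ of $\pi$ and writing $\Lambda=(\lambda_a(\mu),\nu)$ as in \eqref{inf-char}, the Dirac inequality \eqref{Dirac-inequality} gives $\|\nu\|^2\leq\|\mu\|^2_{\rm spin}-\|\mu\|^2_{\rm lambda}$. By the computation $\max_j A_j=82$ from \cite{D21}, this difference is at most $82$ for every u-large $K$-type, so $\|\nu\|^2\geq 82.5$ forces $\mu$ to be u-small; among the $53709$ u-small $K$-types only $23$ satisfy $\|\mu\|^2_{\rm spin}-\|\mu\|^2_{\rm lambda}\geq 82.5$, and for these $1.5\leq\|\lambda_a(\mu)\|^2\leq 26.5$. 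Combined with the already proven bound $\|\nu\|^2\leq 113.5$, this yields $84\leq\|\Lambda\|^2\leq 140$, so only $1113$ integral infinitesimal characters and $1754$ representations (those having a LKT in the list of $23$) need to be examined, of which exactly one nontrivial member is unitary, namely the minimal representation. Without some such a priori bound on $\|\Lambda\|$ and on the LKT, your search never terminates, and the Vogan-pencil/\texttt{is\_unitary} phase of your plan has no finite list to act on. A smaller point: the trivial representation is not a ``degenerate endpoint'' with $\nu=0$; its parameter has $\|\nu\|^2=198$ (see Example \ref{exam-nus-FS-Dirac-series}), so it genuinely satisfies \eqref{further-HJ-bound} and must appear in the conclusion.
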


Note that $E_{7(-5)}$ has a unique minimal representation, namely, the first entry of Table \ref{table-EVI-1110111}. It has GK dim $17$, which equals to $B(\rho, \gamma_7^\vee)$. Recall that $\gamma_7$
defined in \eqref{gamma-7} is the highest root of $\Delta^+(\frg, \frt_f)$, and $\gamma_7^\vee$ is its coroot.

\begin{proof}
Take a LKT $\mu$ of $\pi$. Then its infinitesimal character has the form \eqref{inf-char}. By the Dirac inequality \eqref{Dirac-inequality}, one has that
$$
\|\Lambda\|^2=\|\lambda_a(\mu)\|^2 +\|\nu\|^2\leq \|\mu\|^2_{\rm spin}.
$$
Therefore,
\begin{equation}\label{nu-bound}
\|\nu\|^2\leq \|\mu\|^2_{\rm spin}- \|\mu\|^2_{\rm lambda}.
\end{equation}
As computed in Section 5.8 of \cite{D20}, we have that $\max\{A_j\mid 0\leq j\leq 62\}=82$. We refer the reader to Section 3 of \cite{D20} for the precise meaning of $A_j$. It follows that for any u-large $K$-type $\mu$, one has that
$$\|\mu\|_{\rm spin}^2 - \|\mu\|_{\rm lambda}^2\leq 82.
$$
Since $\|\nu\|\geq\sqrt{82.5}$, we conclude from \eqref{nu-bound} that $\mu$ can \emph{not} be u-large.

There are $53709$ u-small $K$-types in total. Among them, only $23$ have the property that
$$
82.5\leq \|\mu\|_{\rm spin}^2 - \|\mu\|_{\rm lambda}^2.
$$
Let us collect these $23$ u-small $K$-types as \texttt{Certs}. Its elements are listed as follows:
\begin{align*}
&[0, 0, 0, 0, 0, 0, 2], [0, 0, 0, 0, 0, 0, 4], [0, 0, 0, 0, 0, 0, 6], [0, 0, 0, 0, 0, 1, 3],\\
&[0, 0, 0, 0, 0, 1, 5], [0, 0, 0, 0, 0, 1, 7], [0, 0, 0, 0, 0, 2, 4], [0, 0, 0, 0, 0, 2, 6], \\
&[0, 0, 0, 0, 0, 3, 7], [0, 0, 0, 0, 1, 0, 2], [0, 0, 0, 0, 1, 0, 4], [0, 0, 0, 0, 1, 0, 6], \\
&[0, 0, 0, 0, 1, 1, 3], [0, 0, 0, 0, 1, 1, 5], [0, 0, 0, 1, 0, 0, 6], [0, 0, 1, 0, 0, 0, 5], \\
&[0, 1, 0, 0, 0, 0, 4], [0, 1, 0, 0, 0, 0, 6], [1, 0, 0, 0, 0, 0, 3], [1, 0, 0, 0, 0, 0, 5], \\
&[1, 0, 0, 0, 0, 0, 7], [1, 0, 0, 0, 0, 1, 4], [1, 0, 0, 0, 0, 1, 6].
\end{align*}
Moreover, we compute that
$1.5\leq \|\lambda_a(\mu)\|^2\leq 26.5$ for any $\mu\in \texttt{Certs}$. Therefore,
\begin{equation}\label{can-Lambda}
1.5+ 82.5 \leq \|\Lambda\|^2=\|\lambda_a(\mu)\|^2+\|\nu\|^2\leq 26.5 +113.5.
\end{equation}
The right hand side above uses the proven bound \eqref{new-HJ-bound}. There are $1113$ integral $\Lambda$s meeting the requirement \eqref{can-Lambda}. We collect them as $\Omega$.

Now a direct search using \texttt{atlas} says that there are $1754$ irreducible  representations $\pi$ such that $\Lambda\in \Omega$ and that $\pi$ has a LKT which is a member of \texttt{Certs}. Furthermore, only one of them turns out to be unitary: the minimal representation. This finishes the proof.
\end{proof}

\begin{rmk}
We believe that the condition $\Lambda$ being integral is removable. But the proof should be much harder.
\end{rmk}

\begin{example}\label{example-EVI-Phi8-ctd}
Let us continue with Example \ref{example-EVI-Phi8}.
\begin{verbatim}
set fHJ=165/2
set furtherHJ=##for p in allFS do if (nu(p)*TgFWts)*(nu(p)*TgFWts)
<=4*fHJ then [p] else [] fi od
#furtherHJ
Value: 0
\end{verbatim}
Therefore, we now conclude directly that $\Pi_{\rm FS}^{\rm u}([1, 1, 1, 1, 1, 8, 0])=\emptyset$.
\hfill\qed
\end{example}

\begin{example}\label{exam-nus-FS-Dirac-series}
The statistic $\|\nu\|^2$ among the FS-scattered Dirac series is distributed as follows:
\begin{align*}
&7, 8.5, 10, 10.5, 12, 12.5, 13, 13.5^{\underline{2}}, 14.5, 17^{\underline{7}}, 17.5^{\underline{4}}, 18^{\underline{3}}, 25^{\underline{6}},  25.5, 26^{\underline{4}},  \\
&28^{\underline{3}}, 29^{\underline{15}}, 29.5^{\underline{4}}, 30^{\underline{8}},  34^{\underline{4}}, 42^{\underline{2}}, 52, 53.5^{\underline{7}},
54^{\underline{8}}, 73^{\underline{6}}, 73.5, 78^{\underline{7}}, 113.5, 198.
\end{align*}
Here $a^{\underline{n}}$ means the value $a$ occurs $n$ times, and $\underline{n}$ is omitted when $n=1$. Note that $198$ comes from the trivial representation (see Table \ref{table-EVI-1111111}),
while the value $113.5$ comes from the minimal representation (the first entry of Table \ref{table-EVI-1110111}).
\end{example}

\section{Appendix}\label{sec-appendix}

This appendix presents all the $103$  FS-scattered Dirac series representations of $E_{7(-5)}$ according to their infinitesimal characters. Those marked with $\clubsuit$ are special unipotent representations (see Section \ref{sec-sup}). Whenever the even part and the odd part of the Dirac cohomology share the same $\widetilde{K}$-type(s), we will mark the representation with a star (see Section \ref{sec-DI}).

\begin{table}[H]
\centering
\caption{Infinitesimal character $[0,1,1,0,1,0,1]$}
\begin{tabular}{lccc}
 $\# x$ & $\lambda$ & $\nu$ & Spin LKTs   \\
\hline
$8774$ & $[-1,-1,2,1,3,-2,3]$ & $[-2,1,2,-1,3,-2,2]$ & $[0,4,0,0,0,0,6]$, $[0,2,0,2,0,0,2]$,\\
      &                       &                      &$[0,2,0,0,0,4,2]$\\
$8701$ & $[-2,1,1,1,2,-1,2]$ & $[-\frac{3}{2},0,\frac{5}{2},-\frac{1}{2},\frac{5}{2},-\frac{5}{2},\frac{5}{2}]$& $[0,0,0,1,0,0,12]$, $[0,0,0,0,0,6,2]$,\\
       &                     & &$[0,2,0,1,0,0,12]$, $[0,0,0,3,0,0,8]$\\
$6881$ & $[-1,3,2,0,1,-3,5]$ & $[-1,2,2,-1,1,-4,5]$
&$[0,2,0,2,0,0,2]$, $[0,0,4,0,0,0,2]$\\
$6337$ & $[-1,3,2,0,1,-3,5]$ & $[-\frac{3}{2},2,\frac{3}{2},0,0,-\frac{7}{2},5]$
& $[0,0,1,0,3,0,7]$, $[2,0,1,0,1,2,7]$\\
$6103$ & $[1,0,1,-1,3,-1,2]$ & $[-3,1,4,-3,4,-3,1]$
& $[0,1,1,0,1,0,9]$, $[1,1,0,1,1,0,11]$,\\
& & & $[0,0,2,0,0,3,5]$, $[0,2,0,0,2,1,9]$\\
$5296$ & $[-2,1,4,-1,2,-3,6]$ & $[-2,0,3,-1,1,-3,5]$
&$[1,0,0,0,3,0,13]$, $[4,0,0,0,0,3,7]$\\
$5289$ & $[-2,1,4,-3,6,-3,2]$ & $[-2,0,3,-3,5,-3,1]$
&$[4,1,0,0,0,2,2]$, $[4,0,1,0,1,1,0]$,\\
& & & $[4,0,0,2,0,0,2]$\\
$3311$ & $[0,1,3,-2,3,-2,3]$ & $[0,0,3,-3,3,-3,3]$
& $[0,0,0,0,0,0,18]$\\

$2401$ & $[2,2,-3,2,-1,2,0]$ & $[-\frac{5}{2},1,\frac{7}{2},-\frac{5}{2},1,-1,2]$
& $[1,1,1,0,0,3,3]$, $[2,0,0,1,0,3,5]$\\
$1672$ & $[-1,3,4,-3,1,0,3]$ & $[-\frac{3}{2},2,\frac{5}{2},-\frac{5}{2},0,0,2]$ &
$[1,2,0,0,1,1,10]$, $[2,2,0,1,0,0,8]$
\end{tabular}
\label{table-EVI-0110101}
\end{table}

\begin{table}[H]
\centering
\caption{Infinitesimal character $[0,1,1,0,1,1,1]$}
\begin{tabular}{lccc}
 $\# x$ & $\lambda$ & $\nu$ & Spin LKTs   \\
\hline
$8776$ & $[-3,3,4,-1,1,2,1]$ & $[-3,4,4,-3,1,1,1]$  & $[0,0,0,0,2,3,3]$,$[0,0,0,1,2,2,2],$\\
& & & $[0,0,0,0,2,4,4]$,$[0,0,0,1,2,3,3]$\\
$8362$ & $[-2,2,4,-1,1,1,2]$ & $[-3,\frac{5}{2},\frac{11}{2},-\frac{5}{2},0,0,\frac{5}{2}]$  & $[2,0,0,0,0,1,11]$, $[2,1,0,0,0,0,12]$,\\
& & &$[2,0,0,0,0,2,12]$, $[2,1,0,0,0,1,13]$\\
$2903$ & $[-1,1,4,-2,1,2,1]$ & $[-\frac{7}{2},1,\frac{9}{2},-3,0,\frac{3}{2},1]$ & $[0,4,0,0,0,2,8]$, $[0,3,0,1,0,2,6]$,\\
& & &  $[1,2,0,0,1,3,8]$
\end{tabular}
\label{table-EVI-0110111}
\end{table}

\begin{table}[H]
\centering
\caption{Infinitesimal character $[1,0,0,1,0,1,1]$}
\begin{tabular}{lccc}
 $\# x$ & $\lambda$ & $\nu$ & Spin LKTs   \\
\hline
$8916_{\clubsuit}$ & $[2,0,0,1,0,3,1]$ & $[2,0,-1,1,0,2,0]$  & $[0,3,0,0,0,1,9]$, $[0,1,0,0,0,5,1]$,\\
& & &$[0,1,0,2,0,1,5]$\\
$8916_{\clubsuit}$ & $[3,0,0,1,0,3,1]$ & $[2,0,-1,1,0,2,0]$ & $[0,4,0,0,0,0,8]$, $[0,2,0,0,0,4,0]$,\\
& & & $[0,2,0,2,0,0,4]$\\
$8904_{\clubsuit}$ & $[2,0,-1,2,0,3,1]$ & $[\frac{3}{2},0,-\frac{3}{2},\frac{3}{2},0,2,0]$ & $[0,0,0,0,0,1,15]$, $[0,3,0,0,0,1,9]$,\\
& & & $[0,1,0,0,0,5,1]$, $[0,1,0,2,0,1,5]$\\
$8809_{\clubsuit}$ & $[2,-1,-1,3,-1,4,1]$ & $[1,-1,-1,2,-1,3,0]$  & $[0,0,0,1,0,0,14]$, $[0,2,0,1,0,0,10]$,\\
& & & $[0,0,0,0,0,6,0]$, $[0,0,0,3,0,0,6]$\\
$8460^*$ & $[2,1,-3,3,0,0,2]$ & $[1,-3,-3,4,0,1,1]$  & $[0,2,0,0,2,1,7]$, $[0,3,0,0,2,0,6]$\\
& & & $[0,0,2,0,0,3,3]$, $[0,1,2,0,0,2,2]$\\
$6815_{\clubsuit}$ & $[2,-1,-4,6,-1,1,1]$ & $[1,-1,-4,5,-1,0,0]$
& $[0,0,0,0,0,1,15]$\\
$6706^*$ & $[1,0,-4,5,0,1,1]$ & $[0,0,-4,4,-1,1,1]$
& $[4,0,0,0,0,3,5]$, $[4,1,0,0,0,2,4]$\\
$6884^*$ & $[2,-1,-1,3,-1,1,1]$ & $[1,-4,-1,5,-3,1,1]$
& $[3,0,2,0,1,0,3]$, $[2,1,1,0,1,1,4]$\\
$6442$ & $[-1,-1,0,3,-2,3,2]$ & $[1,-2,-2,3,-3,4,1]$
& $[0,0,4,0,0,0,0]$, $[0,2,0,2,0,0,4]$\\
$6598^*$ & $[0,-1,1,1,-1,5,-3]$ & $[\frac{3}{2},-\frac{7}{2},-\frac{3}{2},5,-\frac{7}{2},\frac{3}{2},1]$
& $[4,0,2,0,0,0,4]$,  $[3,1,1,0,0,1,5]$, \\
& & & $[4,0,1,0,1,1,2]$,  $[3,1,0,0,1,2,3]$\\
$6359^*$ & $[3,-3,-2,6,-3,2,1]$ & $[\frac{3}{2},-\frac{7}{2},-\frac{3}{2},5,-\frac{7}{2},2,0]$
& $[0,0,0,0,0,1,15]$, $[0,0,0,1,0,0,14]$ \\
$6357^*$ & $[3,-3,-2,6,-3,2,1]$ & $[\frac{3}{2},-\frac{7}{2},-\frac{3}{2},5,-\frac{7}{2},2,0]$
& $[0,0,0,3,0,0,6]$, $[0,1,0,2,0,1,5]$\\
$5997$ & $[1,0,1,1,-2,3,1]$ & $[0,-\frac{5}{2},-2,4,-3,3,\frac{3}{2}]$
& $[0,0,0,0,4,0,10]$, $[1,0,1,0,2,0,10]$,\\
& & & $[3,0,1,0,0,2,6]$, $[3,0,0,0,1,3,4]$\\
$5764$ & $[1,-2,-1,4,-3,4,2]$ & $[0,-2,-\frac{3}{2},\frac{7}{2},-\frac{7}{2},\frac{7}{2},\frac{3}{2}]$
& $[1,0,0,0,3,0,11]$, $[4,0,0,0,0,3,5]$\\
$5763$ & $[1,-2,-1,4,-3,4,2]$ & $[0,-2,-\frac{3}{2},\frac{7}{2},-\frac{7}{2},\frac{7}{2},\frac{3}{2}]$
 & $[0,0,1,0,3,0,9]$, $[2,0,1,0,1,2,5]$\\
$4964^*$ & $[1,0,1,1,-2,4,-1]$ & $[0,-4,-\frac{1}{2},4,-3,\frac{5}{2},1]$
& $[5,0,1,0,0,1,3]$ , $[4,1,0,0,0,2,4]$\\
$4826_{\clubsuit}$ & $[1,-3,0,4,-3,4,1]$ & $[1,-\frac{7}{2},-\frac{1}{2},\frac{7}{2},-\frac{7}{2},\frac{7}{2},0]$
& $[0,0,0,0,0,0,16]$\\
$4567^*$ & $[1,-1,-3,6,-3,2,1]$ & $[1,-\frac{1}{2},-3,\frac{9}{2},-\frac{7}{2},1,1]$
& $[0,0,2,0,0,0,12]$, $[0,1,1,0,1,0,11]$\\
$4285^*$ & $[1,0,-2,5,-4,3,1]$ & $[1,0,-\frac{5}{2},4,-4,\frac{3}{2},1]$
& $[0,3,0,0,2,0,6]$, $[1,2,0,1,1,0,5]$\\
$4027$ & $[2,0,-1,2,-1,1,2]$ & $[\frac{7}{2},-2,-2,3,-3,\frac{3}{2},1]$
& $[0,4,0,0,0,0,8]$, $[1,2,0,0,1,1,8]$,\\
& & & $[1,1,0,1,1,1,6]$, $[0,2,0,2,0,0,4]$\\
$3742$ & $[4,-1,-2,3,-2,2,2]$ & $[4,-\frac{3}{2},-\frac{5}{2},\frac{5}{2},-\frac{5}{2},\frac{3}{2},1]$
& $[6,0,0,0,2,0,0]$, $[6,0,0,1,0,0,2]$\\
$3741$ & $[4,-1,-2,3,-2,2,2]$ & $[4,-\frac{3}{2},-\frac{5}{2},\frac{5}{2},-\frac{5}{2},\frac{3}{2},1]$
& $[0,0,0,1,0,0,14]$\\
$3740$ & $[4,-1,-2,3,-2,2,2]$ & $[4,-\frac{3}{2},-\frac{5}{2},\frac{5}{2},-\frac{5}{2},\frac{3}{2},1]$
& $[1,3,0,0,1,0,7]$, $[1,2,0,1,1,0,5]$\\
$3739$ & $[4,-1,-2,3,-2,2,2]$ & $[4,-\frac{3}{2},-\frac{5}{2},\frac{5}{2},-\frac{5}{2},\frac{3}{2},1]$
&$[0,3,0,0,0,1,9]$, $[0,1,0,2,0,1,5]$\\
$3187^*$ & $[3,-1,-3,4,-2,2,2]$ & $[\frac{5}{2},-1,-\frac{7}{2},\frac{7}{2},-\frac{5}{2},1,\frac{3}{2}]$
& $[7,0,0,0,1,0,1]$, $[6,0,0,1,0,0,2]$\\
$2655^*$ & $[2,-1,-1,4,-3,1,3]$ & $[2,-\frac{1}{2},-2,3,-3,0,\frac{5}{2}]$
& $[1,0,1,0,0,4,2]$, $[2,0,0,1,0,3,3]$\\
$1521$ & $[2,-1,-2,4,-2,1,2]$ & $[1,-2,-2,3,-\frac{3}{2},0,\frac{3}{2}]$
& $[4,0,0,0,0,3,5]$, $[3,1,1,0,0,1,5]$,\\
& & & $[2,0,1,0,1,2,5]$, $[3,1,0,0,1,2,3]$
\end{tabular}
\label{table-EVI-1001011}
\end{table}

\begin{table}[H]
\centering
\caption{Infinitesimal character $[1,0,0,1,1,0,1]$}
\begin{tabular}{lccc}
 $\# x$ & $\lambda$ & $\nu$ & Spin LKTs   \\
\hline
$5972$  & $[1,-2,-1,4,1,-3,5]$  & $[1,-2,-2,3,1,-4,5]$ & $[0,0,3,1,0,0,1]$, $[0,1,1,2,0,0,3]$\\
$5247$  & $[1,-1,-1,3,1,-2,4]$  & $[0,-2,-\frac{3}{2},\frac{7}{2},0,-\frac{7}{2},5]$ & $[2,0,0,0,3,0,12]$, $[5,0,0,0,0,3,6]$\\
$5246$  & $[1,-1,-1,3,1,-2,4]$  & $[0,-2,-\frac{3}{2},\frac{7}{2},0,-\frac{7}{2},5]$
& $[1,0,1,0,3,0,8]$, $[2,0,1,0,2,1,6]$\\
$4283$  & $[1,-2,0,3,1,-2,3]$  & $[1,-\frac{7}{2},-\frac{1}{2},\frac{7}{2},0,-\frac{7}{2},\frac{7}{2}]$
& $[1,0,0,0,0,0,17]$
\end{tabular}
\label{table-EVI-1001101}
\end{table}

\begin{table}[H]
\centering
\caption{Infinitesimal character $[1,0,0,1,1,1,1]$}
\begin{tabular}{lccc}
 $\# x$ & $\lambda$ & $\nu$ & Spin LKTs   \\
\hline
$8556$  & $[2,1,-1,2,-1,1,2]$  & $[1,-4,-4,5,1,1,1]$ & $[0,0,0,0,3,3,3]$\\
$7888$  & $[2,-1,-3,4,1,1,2]$  & $[\frac{5}{2},-\frac{5}{2},-\frac{11}{2},\frac{11}{2},0,0,\frac{5}{2}]$ & $[3,0,0,0,0,1,12]$
\end{tabular}
\label{table-EVI-1001111}
\end{table}

\begin{table}[H]
\centering
\caption{Infinitesimal character $[1,0,1,1,0,1,1]$}
\begin{tabular}{lccc}
 $\# x$ & $\lambda$ & $\nu$ & Spin LKTs   \\
\hline
$7273$  & $[1,-1,2,2,-1,1,1]$  & $[0,-5,2,5,-4,1,1]$ & $[0,4,0,0,0,0,0]$, $[0,4,0,0,0,2,2]$\\
$7070$  & $[-1,-1,2,2,-1,2,1]$  & $[1,-\frac{9}{2},2,\frac{9}{2},-\frac{9}{2},2,0]$ & $[0,0,0,1,0,2,12]$, $[0,0,0,0,0,6,8]$,\\
& & & $[0,0,0,3,0,0,14]$
\end{tabular}
\label{table-EVI-1011011}
\end{table}

\begin{table}[H]
\centering
\caption{Infinitesimal character $[1,1,0,1,0,1,0]$}
\begin{tabular}{lccc}
 $\# x$ & $\lambda$ & $\nu$ & Spin LKTs   \\
\hline
$8385$  & $[1,-1,-1,3,0,1,0]$  & $[0,1,-2,4,-3,3,-2]$ &  $[0,4,0,0,0,0,4]$, $[0,2,0,2,0,0,0]$,\\
& & & $[0,2,0,0,0,4,4]$\\
$8272$  & $[-1,1,0,3,-1,2,-1]$  & $[1,0,-\frac{5}{2},\frac{9}{2},-\frac{5}{2},\frac{5}{2},-\frac{5}{2}]$ & $[0,0,0,1,0,0,10]$, $[0,0,0,0,0,6,4]$, \\
& & & $[0,2,0,1,0,0,14]$, $[0,0,0,3,0,0,10]$\\
$2450$  & $[2,1,-1,2,-1,2,-1]$  & $[\frac{5}{2},1,-\frac{5}{2},2,-3,3,-2]$ & $[1,1,0,1,1,0,13]$, $[1,0,0,2,1,0,11]$\\
$2419$  & $[1,1,0,2,0,-1,0]$  & $[\frac{5}{2},0,-\frac{5}{2},2,-\frac{5}{2},\frac{7}{2},-\frac{5}{2}]$ & $[6,0,0,0,2,0,4]$, $[6,0,0,1,0,0,6]$
\end{tabular}
\label{table-EVI-1101010}
\end{table}

\begin{table}[H]
\centering
\caption{Infinitesimal character $[1,1,0,1,0,1,1]$}
\begin{tabular}{lccc}
 $\# x$ & $\lambda$ & $\nu$ & Spin LKTs   \\
\hline
$8893$  & $[3,1,0,1,0,1,1]$  & $[3,1,-2,3,-2,1,1]$ &  $[0,0,0,2,1,1,1], [0,0,0,1,1,3,3]$,\\
& & & $0,0,0,0,1,5,5]$\\
$8673$  & $[3,1,0,2,-1,1,2]$  & $[3,0,0,\frac{5}{2},-\frac{5}{2},0,\frac{5}{2}]$ & $[1,0,0,0,0,1,10]$, $[1,1,0,0,0,1,12]$, \\
& & & $[1,2,0,0,0,1,14]$\\
$5479$  & $[2,2,0,-1,1,2,1]$  & $[5,1,-4,1,-2,3,1]$ & $[1,3,0,0,2,0,1]$, $[2,2,1,0,2,0,3]$\\
$5170$  & $[1,1,-1,3,-2,2,1]$ & $[1,1,-4,5,-4,1,1]$ &  $[0,1,2,0,1,0,10]$, $[0,0,3,0,0,2,7]$, \\
        &                     &                     & $[0,2,1,0,2,0,11]$\\
$4893$  & $[4,2,-2,1,-1,3,1]$  & $[5,\frac{3}{2},-\frac{7}{2},0,-\frac{3}{2},\frac{7}{2},0]$ & $[1,0,0,0,0,2,15]$, $[0,1,0,0,1,1,17]$\\
$4891$  & $[4,2,-2,1,-1,3,1]$  & $[5,\frac{3}{2},-\frac{7}{2},0,-\frac{3}{2},\frac{7}{2},0]$ & $[0,0,0,2,1,2,6]$, $[0,0,1,1,0,3,8]$\\
$4287$  & $[1,1,-1,3,-2,2,1]$  & $[1,0,-3,5,-5,2,1]$ &  $[5,0,0,0,0,3,0]$, $[5,0,0,1,0,2,1]$\\
$3693$  & $[3,1,-2,1,0,3,1]$  & $[\frac{7}{2},0,-\frac{7}{2},0,0,3,1]$ &  $[6,1,0,0,1,0,6]$, $[5,2,0,0,0,1,4]$\\
$2321$  & $[2,1,-1,2,-1,1,2]$  & $[3,0,-3,3,-3,0,3]$ & $[0,0,0,0,1,0,20]$
\end{tabular}
\label{table-EVI-1101011}
\end{table}

\begin{table}[H]
\centering
\caption{Infinitesimal character $[1,1,0,1,1,0,1]$}
\begin{tabular}{lccc}
$\# x$ & $\lambda$ & $\nu$ & Spin LKTs   \\
\hline
$4952$  & $[0,2,2,-1,1,1,0]$  & $[5,1,-4,1,1,-3,4]$ &  $[0,4,0,0,2,0,0]$, $[1,3,0,0,3,0,1]$,\\
        &                     &                     & $[1,3,1,0,2,0,2]$\\
$4354$  & $[4,2,-2,1,2,-2,3]$  & $[5,\frac{3}{2},-\frac{7}{2},0,2,-\frac{7}{2},\frac{7}{2}]$ & $[2,0,0,0,0,2,16]$, $[1,0,0,0,1,2,17]$,\\
        &                     &                     & $[1,1,0,0,1,1,18]$\\
$4352$  & $[4,2,-2,1,2,-2,3]$  & $[5,\frac{3}{2},-\frac{7}{2},0,2,-\frac{7}{2},\frac{7}{2}]$ & $[0,0,0,1,2,3,7]$, $[0,0,1,0,1,4,9]$,\\
        &                      &                    &$[0,0,1,1,1,3,8]$\\
$3162$  & $[3,1,-2,1,2,-1,4]$  & $[\frac{7}{2},0,-\frac{7}{2},0,3,-3,4]$ & $[6,2,0,0,0,0,6]$, $[7,1,0,0,1,0,7]$,\\
        &                      &                    &$[6,2,0,0,0,1,5]$\\
\end{tabular}
\label{table-EVI-1101101}
\end{table}

\begin{table}[H]
\centering
\caption{Infinitesimal character $[1,1,0,1,1,1,1]$}
\begin{tabular}{lccc}
 $\# x$ & $\lambda$ & $\nu$ & Spin LKTs   \\
\hline
$6443$  & $[0,1,1,2,-1,1,1]$  & $[7,0,-7,2,1,1,1]$&  $[0,6,0,0,0,0,0]+n\beta$, $0\leq n\leq 1$\\
$6152$  & $[4,1,-1,1,1,1,1]$  & $[\frac{15}{2},0,-\frac{13}{2},2,0,2,0]$ & $[ 0,0,0,0,0,5,13]$, $[0,0,0,0,0,6,12]$\\
& & & $[0,0,0,1,0,4,14]$
\end{tabular}
\label{table-EVI-1101111}
\end{table}

\begin{table}[H]
\centering
\caption{Infinitesimal character $[1,1,1,0,1,0,1]$}
\begin{tabular}{lccc}
$\# x$ & $\lambda$ & $\nu$ & Spin LKTs   \\
\hline
$8935_{\clubsuit}$  & $[1,2,0,-1,3,-1,2]$  & $[1,1,2,-1,2,-1,1]$& $[0,0,0,1,0,3,3]$, $[0,0,0,2,0,3,3]$,\\
& & & $[0,0,0,a,0,6-2a,6-2a]$, $0\leq a\leq 3$\\
$8817_{\clubsuit}$  & $[3,1,-1,1,2,-1,2]$  & $[\frac{5}{2},0,1,-\frac{1}{2},\frac{5}{2},-\frac{5}{2},\frac{5}{2}]$& $[0,a,0,0,0,1,9+2a]$, $0\leq a\leq 4$,\\
& & & $[0,2,0,0,0,0,12]$, $[0,1,0,0,0,2,12]$\\
$7928$  & $[1,4,2,-3,4,-1,1]$  & $[0,5,2,-4,4,-3,1]$& $[0,4,0,0,0,0,2]$,\\
& & & $[0,3,0,0,0,2,2]$, $[0,3,0,0,0,3,3]$\\
& & & $[0,3,0,1,0,1,1]$, $[0,3,0,1,0,2,2]$\\
$7829$  & $[-1,3,2,-2,3,0,1]$  & $[1,\frac{9}{2},2,-\frac{9}{2},\frac{9}{2},-\frac{5}{2},0]$&  $[ 0,0,0,1,0,1,11]$, $[0,1,0,0,0,2,12]$, \\
& & &  $[0,0,0,2,0,0,12]$, $[0,1,0,1,0,1,13]$, \\
& & &  $[0,0,0,0,0,6,6]$, $[0,0,0,3,0,0,12]$\\
$7836^*$  & $[2,2,1,0,1,-2,4]$  & $[1,1,1,0,1,-5,6]$& $[0,0,1,0,3,0,3]$, $[0,1,1,0,3,0,5],$\\
& & & $[0,0,2,0,2,1,3]$, $[0,1,2,0,2,0,6]$,\\
& & & $[0,0,4,0,0,0,6]$, $[0,0,3,0,1,1,4]$\\
$5449$  & $[1,3,1,0,1,-2,3]$  & $[1,4,1,-1,0,-4,4]$&  $[2,0,0,0,0,0,16]$, $[1,0,0,0,1,0,17]$,\\
&  &  & $[0,0,1,0,1,0,19]$\\
$5316$  & $[1,4,1,-1,2,-3,5]$  & $[0,4,0,-1,1,-4,5]$&  $[6,0,0,0,0,2,6]$, $[6,0,0,1,0,0,8]$,\\
& & & $[6,0,0,0,0,3,5]$\\
$5481$  & $[1,1,5,-2,1,-2,5]$  & $[1,1,4,-3,1,-3,4]$& $[0,3,0,0,2,0,0]$, $[1,2,1,0,2,0,2]$,\\
& & & $[1,2,2,0,1,0,3]$\\
$5480$  & $[-1,2,1,0,1,0,2]$  & $[1,1,1,-3,4,-4,5]$& $[0,0,0,3,0,0,0]$, $[0,0,2,2,0,0,0]$,\\
& & &$[0,1,0,3,0,0,2]$\\
$5072$  & $[1,0,2,2,-1,-2,2]$  & $[0,\frac{3}{2},2,-4,4,-3,\frac{9}{2}]$&  $[2,0,0,0,4,0,10]$, $[3,0,1,0,2,0,10]$,\\
& & & $[4,0,0,0,2,2,6]$, $[4,0,1,0,1,1,8]$,\\
& & & $[4,0,2,0,0,0,10]$\\
$4901$  & $[2,2,3,-2,2,-2,3]$  & $[\frac{3}{2},\frac{3}{2},\frac{7}{2},-\frac{7}{2},2,-\frac{7}{2},\frac{7}{2}]$
&$[2,0,0,0,0,1,15]$, $[1,0,0,0,1,1,16]$,\\
& & & $[0,1,0,0,2,0,18]$\\
$4899$  & $[2,2,3,-2,2,-2,3]$  & $[\frac{3}{2},\frac{3}{2},\frac{7}{2},-\frac{7}{2},2,-\frac{7}{2},\frac{7}{2}]$&
$[0,0,0,1,2,2,6]$, $[0,0,1,1,1,2,7]$,\\
& & & $[0,0,2,0,0,3,9]$\\
$4800$  & $[1,2,2,-2,3,-2,4]$  & $[0,\frac{3}{2},2,-\frac{7}{2},\frac{7}{2},-\frac{7}{2},5]$&
$[3,0,0,0,3,0,11]$, $[5,0,0,0,1,2,7]$, \\
& & &$[5,0,1,0,0,1,9]$\\
$4799$  & $[1,2,2,-2,3,-2,4]$  & $[0,\frac{3}{2},2,-\frac{7}{2},\frac{7}{2},-\frac{7}{2},5]$& $[2,0,1,0,3,0,9]$, $[3,0,1,0,2,1,7]$,\\
& & & $[3,0,2,0,1,0,9]$\\
$3750$  & $[1,1,3,-2,3,-2,3]$  & $[1,0,3,-\frac{7}{2},\frac{7}{2},-\frac{7}{2},\frac{7}{2}]$&  $[0,1,0,0,0,0,18]$, $[0,0,0,1,0,0,20]$\\
$3605$  & $[1,1,3,-2,2,-1,4]$  & $[0,0,\frac{7}{2},-\frac{7}{2},3,-3,4]$&  $[7,0,0,0,1,0,7]$, $[6,1,0,0,0,1,5]$,\\
& & & $[6,1,0,0,0,2,4]$\\
$3538$  & $[1,-3,2,0,2,1,0]$  & $[1,\frac{9}{2},1,-\frac{7}{2},1,-\frac{3}{2},\frac{5}{2}]$&  $[0,1,2,0,2,0,6]$, $[0,0,4,0,0,0,6]$,\\
& & & $[0,2,1,1,1,0,7]$\\
$3275$  & $[2,4,2,-3,2,-1,3]$  & $[1,4,\frac{3}{2},-4,\frac{3}{2},-\frac{3}{2},\frac{5}{2}]$& $[3,0,0,0,1,4,1]$, $[3,0,1,0,0,4,2]$\\
$3274$  & $[2,4,2,-3,2,-1,3]$  & $[1,4,\frac{3}{2},-4,\frac{3}{2},-\frac{3}{2},\frac{5}{2}]$&  $[2,0,2,0,0,0,12]$, $[1,0,2,0,1,0,13]$,\\
& & & $[3,0,1,0,0,2,12]$\\
$2467$  & $[1,1,2,-2,4,-2,2]$  & $[1,1,1,-3,4,-3,1]$& $[4,0,0,2,0,0,6]$, $[3,0,0,2,1,0,7]$,\\
        &   &  &   $[4,0,1,1,1,0,5]$, $[4,1,0,1,0,1,7]$\\
$1867$  & $[2,2,1,-1,1,0,2]$  & $[\frac{5}{2},\frac{5}{2},0,-\frac{5}{2},0,0,\frac{5}{2}]$& $[8,0,0,0,0,1,3]$, $[8,0,0,1,0,0,2]$\\
$1760$  & $[2,2,1,-2,3,-1,2]$  & $[2,2,0,-3,3,-2,1]$&  $[0,2,0,0,2,0,14]$, $[0,2,0,0,2,1,13]$, \\
& & & $[1,1,0,1,1,0,15]$
\end{tabular}
\label{table-EVI-1110101}
\end{table}

\begin{table}[H]
\centering
\caption{Infinitesimal character $[1,1,1,0,1,1,1]$}
\begin{tabular}{lccc}
$\# x$ & $\lambda$ & $\nu$ & Spin LKTs   \\
\hline
$8940^*_{\clubsuit}$  & $[2,1,1,0,1,1,1]$  & $[\frac{3}{2},\frac{3}{2},\frac{3}{2},-\frac{3}{2},\frac{3}{2},2,0]$  & $[0, 0, 0, 0, 0, 0, 4]$ + $n\beta$, $1\leq n\leq 6$\\
$8398$  & $[1,1,1,-2,5,1,1]$  & $[1,1,1,-5,6,1,1]$  & $[0, 0, 0, 0, 4, 0, 0]$ + $n\beta$, $0\leq n\leq 3$\\
$7611$  & $[2,2,1,-2,3,1,2]$  & $[\frac{5}{2},3,0,-\frac{11}{2},\frac{11}{2},0,\frac{5}{2}]$  & $[4,0,0,0,0,0,12]+n\beta$, $0\leq n\leq 3$\\
$6875$  & $[1,1,3,-1,1,1,1]$  & $[0,0,7,-5,1,1,1]$  & $[0,5,0,0,0,0,0]+n\beta$, $0\leq n\leq 2$\\
$6640$  & $[1,1,3,-1,1,1,1]$  & $[1,0,\frac{13}{2},-\frac{9}{2},0,2,0]$  & $[0,0,0,0,0,4,12]$, $[0,0,0,1,0,3,13]$,\\
& & & $[0,0,0,0,0,6,10]$, $[0,0,0,2,0,2,14]$\\
$4653$  & $[1,4,1,-2,2,1,1]$  & $[1,6,1,-5,1,1,1]$  & $[0,0,4,0,0,0,10]$, $[0,0,4,0,0,1,9]$,\\
&  &  & $[0,1,3,0,1,0,11]$\\
$3806$  & $[1,3,2,-2,1,2,1]$  & $[1,5,2,-5,0,2,1]$  & $[6,0,0,0,0,4,0]$, $[6,0,0,1,0,3,1]$\\
$1869$  & $[2,2,1,-1,1,1,2]$  & $[3,3,0,-3,0,0,3]$  &  $[0,0,0,0,2,0,22]$, $[0,0,1,0,1,0,23]$
\end{tabular}
\label{table-EVI-1110111}
\end{table}

\begin{table}[H]
\centering
\caption{Infinitesimal character $[1,1,1,1,0,1,0]$}
\begin{tabular}{lccc}
$\# x$ & $\lambda$ & $\nu$ & Spin LKTs   \\
\hline
$3985$ & $[1,2,1,1,-2,4,-1]$ & $[0,\frac{3}{2},2,0,-4,\frac{11}{2},-\frac{9}{2}]$ &  $[4,0,0,0,4,0,10]$, $[5,0,0,0,3,1,8]$, \\
& & & $[5,0,1,0,2,0,10]$
\end{tabular}
\label{table-EVI-1111010}
\end{table}

\begin{table}[H]
\centering
\caption{Infinitesimal character $[1,1,1,1,0,1,1]$}
\begin{tabular}{lccc}
$\# x$ & $\lambda$ & $\nu$ & Spin LKTs   \\
\hline
$8192$ & $[1,1,2,-1,2,0,1]$ & $[1,1,1,1,-6,7,1]$ & $[0,0,0,0,5,0,0]+n\beta$, $0\leq n\leq 2$\\
$7267$ & $[2,2,1,1,-2,3,2]$ & $[\frac{5}{2},3,0,0,-\frac{11}{2},\frac{11}{2},\frac{5}{2}]$ & $[5,0,0,0,0,0,13]+n\beta$, $0\leq n\leq 2$
\end{tabular}
\label{table-EVI-1111011}
\end{table}

\begin{table}[H]
\centering
\caption{Infinitesimal character $[1,1,1,1,1,0,1]$}
\begin{tabular}{lccc}
$\# x$ & $\lambda$ & $\nu$ & Spin LKTs   \\
\hline
$7939$ & $[2,1,1,1,1,-1,3]$ & $[1,1,1,1,1,-7,8]$ & $[0,0,0,0,6,0,0]+n\beta$, $0\leq n\leq 1$ \\
$6854$ & $[2,1,1,1,1,-1,3]$ & $[\frac{5}{2},3,0,0,0,-\frac{11}{2},8]$ & $[6,0,0,0,0,0,14]+n\beta$, $0\leq n\leq 1$
\end{tabular}
\label{table-EVI-1111101}
\end{table}

\begin{table}[H]
\centering
\caption{Infinitesimal character $[1,1,1,1,1,1,1]$}
\begin{tabular}{lccc}
$\# x$ & $\lambda$ & $\nu$ & Spin LKT   \\
\hline
$8945_{\clubsuit}$ & $[1,1,1,1,1,1,1]$ & $[1,0,1,2,0,2,0]$ & $[0,0,0,0,0,0,0]$
\end{tabular}
\label{table-EVI-1111111}
\end{table}

\centerline{\scshape Funding}
Dong is supported by the National Natural Science Foundation of China (grant 12171344).

\medskip

\centerline{\scshape Acknowledgements}
We are deeply grateful to the \texttt{atlas} mathematicians. In particular, we thank van Leeuwen for improving the \texttt{atlas} script \texttt{print\_branch\_irr\_long} from the efficiency of $N$ to $\log(N)$. For instance, a previous calculation which gave no result after one month now finishes within 24 hours. Using the updated script, branching of the $\#8398$ representation in Table \ref{table-EVI-1110111} to the \texttt{atlas} height $303$ (as required by Theorem \ref{thm-HP}) took about twenty days. Thus it should be a mission impossible without the updated branching script. We also thank Kei Yuen Chan and Kayue Daniel Wong for helpful suggestion/discussion on improving the Helgason-Johnson bound. Finally, we sincerely thank the two referees for giving us excellent suggestions.

\end{document}